\newcommand\R{{\mathbb R}}
\newcommand\N{{\mathbb N}}
\newcommand\al{\alpha}
\newcommand\be{\beta}
\newcommand\ga{\gamma}
\newcommand\la{\lambda}
\newcommand\ka{\kappa}
\newcommand\dist{\buildrel d \over =}
\newcommand{\M}{\mathcal{M}}
\newcommand{\T}{\mathbb{P}}
\newcommand{\E}[1]{\mathbb{E}{#1}}
\newcommand{\vphi}{\varphi}
\def\leq{\leqslant}
\def\geq{\geqslant}
\newtheorem{theorem}{Theorem}
\newtheorem{corollary}[theorem]{Corollary}
\newtheorem{lemma}[theorem]{Lemma}
\newtheorem{example}[theorem]{Example}
\journal{arXiv}
\begin{document}

\begin{frontmatter}
\title{Distribution of Shifted Discrete Random Walk and Vandermonde matrices}

\author[inst1]{Andrius Grigutis}
\
\affiliation[inst1]{organization={Institute of Mathematics},
            addressline={Naugarduko 24}, 
            city={Vilnius},
            postcode={LT-03225}, 
            country={Lithuania}}


\begin{abstract}
In this work we set up the generating function of the ultimate time survival probability $\vphi(u+1)$, where $$\vphi(u)=\mathbb{P}\left(\sup_{n\geqslant 1}\sum_{i=1}^{n}\left(X_i-\ka\right)<u\right)$$
and $u\in\mathbb{N}_0,\,\ka\in\mathbb{N}$, and the random walk $\left\{\sum_{i=1}^{n}X_i,\,n\in\mathbb{N}\right\}$ consists of independent and identically distributed random variables $X_i$, which are non-negative and integer valued. We also give expressions of $\vphi(u)$ via the roots of certain polynomials. Based on the proven theoretical statements, we give several examples on $\vphi(u)$ and its generating function expressions, when random variables $X_i$ admit Bernoulli, Geometric and some other distributions.
\end{abstract}

\begin{keyword}
homogeneous discrete time risk model \sep random walk \sep survival probability \sep initial values \sep generating function \sep Vandermonde matrix
\MSC 60G50 \sep 60J80 \sep 91G05
\end{keyword}

\end{frontmatter}

\section{Introduction and preliminaries}\label{sec:introdution}

The study of sum of independent and identically distributed random va\-ri\-ab\-les (r.vs.) $\sum_{i=1}^{n}X_i$ is hardily avoidable in probability theory and related fields. This sequence of sums $\left\{\sum_{i=1}^{n}X_i,\,n\in\N\right\}$ is called {\it the random walk}. Let us define the stochastic process
\begin{align}\label{model}
W(n):=u+\ka n-\sum_{i=1}^{n}X_i,\,n\in\N,    
\end{align}
where $u\in\mathbb{N}_0:=\mathbb{N}\cup\{0\}$, $\kappa\in\mathbb{N}$ and random variables $X_i,\,i\in\mathbb{N}$ are independent, identically distributed, non-negative and integer valued. The defined process \eqref{model} is called {\it the  generalized premium discrete time risk model}, we abbreviate this naming by $GPDTRM$. Such type of processes appear in insurance mathematics arguing that they describe insurers wealth in time moments $n\in\N$, where $u$ means initial surplus (also called capital or reserve), $\ka$ denotes premium rate (earnings per unit of time), i.e. $(n+1)\ka-n\ka=\ka$, and the random walk $\left\{\sum_{i=1}^{n}X_i,\,n\in\N\right\}$ represents expenses caused by random size claims. Then, it is curious to know whether initial surplus and gained premiums are sufficient to cover an incurred random expenses. More precisely, one aims to know whether $W(n)>0$ for all $n\in\{1,\,2,\,\ldots,\,T\}$ when $T$ is some fixed natural number or $T\to\infty$. The positivity of $W(n)$ is of course associated to likelihood. For the $GPDTRM$ given in \eqref{model} we define {\it the finite time survival probability} 
$$
\vphi(u,T):=\T \left(\bigcap_{n=1}^{T}\left\{W(n)>0\right\}\right) = \T \left(\sup_{1\leq n \leq T}\sum_{i=1}^{n}\left(X_i-\ka\right)<u\right),\,T\in\N
$$
and {\it the ultimate time survival probability
\begin{align}\label{ult_prob}
\vphi(u):=\T \left(\bigcap_{n=1}^{\infty}\left\{W(n)>0\right\}\right) = \T \left(\sup_{n\geq 1}\sum_{i=1}^{n}\left(X_i-\ka\right)<u\right).
\end{align}
}
Both $\vphi(u,T)$ and $\vphi(u)$ are nothing but distribution functions of the provided integer valued sequence of sums of random variables; these functions are left-continuous, non-decreasing and step functions if we allow $u\in\R$. Also, $\vphi(\infty)=1$ if $\E X<\ka$, see the next Section \ref{sec:notations}. 

Calculation of $\vphi(u,T)$ is simple. If $X_i,\,i\in \N$ are independent copies of random variable (r.v.) $X$ and $x_i:=\T (X=i),\,i\in\N_0$, then
$$
\vphi(u,\,1)=\T(X\leq u+\ka-1),\,\vphi(u,\,T)=\sum_{i=1}^{u+\ka-1}\vphi(u+\ka-i,\,T-1)x_i,\,T\geq2.
$$
see, for instance, \cite[Theorem 1]{GS_sym}.

Let's turn to the ultimate time survival probability $\vphi(u)$. The law of total probability and rearrangements in \eqref{ult_prob} imply
\begin{align}\label{eq:recurrence}
\vphi(u)=\sum_{i=1}^{u+\kappa}x_{u+\kappa-i}\vphi(i),   
\end{align}
see \cite[page 3]{GS_sym}. 

By setting $u=0$ in \eqref{eq:recurrence}, we get
\begin{align}\label{u0}
\vphi(0)=x_{\ka-1}\vphi(1)+x_{\ka-2}\vphi(2)+\ldots+x_0\vphi(\ka),
\end{align}
what means that aiming to calculate $\vphi(\ka)$ when $x_0>0$, we must know know the initial ones $\vphi(0),\,\vphi(1),\,\ldots,\,\vphi(\ka-1)$. Equally, requirement to know $\vphi(0),\,\vphi(1),\,\ldots,\,\vphi(\ka-1)$ remains actual calculating $\vphi(u)$ for $u=\ka,\,\ka+1\,\ldots$ by recurrence \eqref{eq:recurrence}. The needed quantity of these initial values is $X$ distribution dependent as some of $x_0,\,x_1,\,\ldots,\,x_{\ka-1}$ may vanish, c.f. \eqref{u0} when $\T(X>j)=1$ for some $j\geq0$. The paper \cite{GS_sym} deals with finding the mentioned initial values and it is shown there that they can be found calculating limits of a certain recurrent sequences. For instance, if $\ka=2$ and $x_0>0$, then it follows by \eqref{u0} that 
$$
\vphi(0)=x_1\vphi(1)+x_0\vphi(2),
$$
where (see \cite[page 2 and 3]{GJ})

\begin{align}\label{limits}
\varphi(0) = \varphi(\infty)\lim_{n \to \infty}\frac{\gamma_{n+1} - \gamma_n}{\begin{vmatrix}\beta_n&\gamma_n\\\beta_{n+1}&\gamma_{n+1}\end{vmatrix}},
\,\varphi(1) = \varphi(\infty)\lim_{n \to \infty}\frac{\beta_n - \beta_{n+1}}{\begin{vmatrix}\beta_n&\gamma_n\\\beta_{n+1}&\gamma_{n+1}\end{vmatrix}},
\end{align}
when $|\cdot|$ is determinant,
\begin{align*}
&\be_0 = 1,\, \be_1 = 0, \, \be_n = \frac{1}{x_0} \left( \be_{n-2} -\sum_{i=1}^{n-1} x_{n-i} \be_i\right), \text{ for } n \geqslant 2,\\
&\ga_0= 0,\,\ga_1 = 1,\,  \ga_n = \frac{1}{x_0} \left( \ga_{n-2} -\sum_{i=1}^{n-1} x_{n-i} \ga_i\right), \text{ for } n \geqslant 2,
\end{align*}
and $\vphi(\infty)=1$ if $\E X<2$.

    Calculating the limits in \eqref{limits} and aiming to prove that provided determinant $2\times2$ never vanishes, in paper \cite{GJ} it was proved their connection to the solutions of $s^2=G_X(s)$, where $s\in\mathbb{C},\,|s|\leq1$ and $G_X(s)$ is the probability generating function of r.v. $X$. On top of that, it was realized in \cite{GJ} that the values of $\vphi(0)$ and $\vphi(1)$ in \eqref{limits} can be derived by the classical stationarity property for the distribution of the maximum of a reflected random walk, see \cite[Chapter VI, Section 9]{Feller}. Using the mentioned stationarity property, the generating function of $\vphi(u+1),\,u\in\mathbb{N}_0$ for $\ka=2$ was found in \cite[Theorem 5]{GJ}, however there was required the finitiness of the second moment of r.v. $X$, i.e. $\E X^2<\infty$. In this article, we proceed the work \cite{GJ} and find the generating function of $\vphi(u+1),\,u\in\mathbb{N}_0$ for arbitrary $\ka\in \N$. More over, we show that the requirement of $\E X^2<\infty$ is redundant and provide an exact expressions of $\vphi(u),\,u\in\N_0$ via solutions of systems of linear equations which are based on the roots of $s^\ka=G_X(s)$ and Vandermonde-like matrices.
    
For the short overview of literature, we mention that references \cite{Andersen}, \cite{Spitzer1}, \cite{Gerber}, \cite{Gerber1}, \cite{Shiu}, \cite{Shiu1}, \cite{DEVYLDER} are known as the classical ones on the wide subject of renewal risk models, while \cite{Rincon}, \cite{Cang} might be mentioned as the recent ones in nowadays. This work is also closely related to branching and Galton-Watson processes and queueing theory, see \cite{Kendal}, \cite{Kendal_1}, \cite{Kendal_2} and related papers. See also \cite{Arguin} or \cite[Figure 1]{Soundar} on random walks occurrence in number theory. Last but not least, it is worth mentioning that Vandermonde matrices have a broad range of occurrence from pure mathematics to many other applied sciences, see \cite{Rawashdeh} and related works. 

\section{Several auxiliary notations and the net profit condition}\label{sec:notations}

Let
\begin{align*}
&\mathcal{M}:=\sup_{n\geqslant1}\left(\sum_{i=1}^{n}(X_i-\kappa)\right)^+,
\end{align*}
where $x^+=\max\{0,x\}$, $x\in\mathbb{R}$ is the positive part function and r.vs. $X_i$ and $\ka\in\N$ are the same as in the model \eqref{model}. Let us denote the local probabilities of r.v. $\mathcal{M}$ by
\begin{align*}
\pi_i:=\mathbb{P}(\mathcal{M}=i),\, i\in\mathbb{N}_0
.
\end{align*}

Then, the ultimate time survival probability definition \eqref{ult_prob} implies that
\begin{align}\label{eq:phi_pi}
\varphi(u+1)=\sum_{i=0}^{u}\pi_i=\mathbb{P}(\mathcal{M}\leqslant u) \text{ for all }u\in\mathbb{N}_0.
\end{align}

In general, the r.v. $\mathcal{M}$ can be extended, i.e. $\T(\mathcal{M}=\infty)>0$, however the condition $\E X<\ka$
ensures $\T(\mathcal{M}<\infty)=1$. This is true due to
$$
\lim_{u\to\infty}\vphi(u)=1 \text{ if } \E X<\ka,
$$
see \cite[Lemma 1]{GS_sym}. The condition $\E X<\ka$ is called {\it the net profit condition} and it is crucial because the survival is impossible, i.e. $\vphi(u)=0$ for all $u\in\mathbb{N}_0$, if $\E X \geqslant \ka$, except few trivial cases when $\T(X=\ka)=1$, see \cite[Theorem 9]{GS_sym}.
Intuitively, it is clear that long term survival by model \eqref{model} is impossible if the threatening claim amount $X$ on average is equal or greater to the collected premium $\ka$ per unit of time.  

For $s\in\mathbb{C}$, let us denote the generating function  of $\vphi(1),\,\vphi(2),\,\ldots,$
$$
\Xi(s):=\sum_{i=0}^{\infty}\vphi(i+1)s^i,\,|s|< 1
$$
and the probability generating functions of r.vs. $X$ and $\mathcal{M}$
$$
G_X(s):=\sum_{i=0}^{\infty}x_is^i,\,
G_{\mathcal{M}}(s):=\sum_{i=0}^{\infty}\pi_is^i,\,
|s|\leq 1.
$$

Then, $\Xi(s)$ and $G_{\mathcal{M}}(s)$, for $|s|<1$, satisfy the relation
\begin{align}\label{relation_G}\nonumber
\Xi(s)&=\sum_{i=0}^{\infty}\vphi(i+1)s^i\\
&=\sum_{i=0}^{\infty}\sum_{j=0}^{i}\pi_js^i
=\sum_{j=0}^{\infty}\pi_j\sum_{i=j}^{\infty}s^i
=\frac{\sum_{j=0}^{\infty}\pi_js^j}{1-s}
=\frac{G_{\mathcal{M}}(s)}{1-s}.
\end{align}

In many examples, the radius of convergence of $G_X(s)$ or $G_{\mathcal{M}}(s)$ is larger than one. See \cite[Lemma 8]{GJ} for more properties of probability generating function in $|s|\leq1$.


\section{Main results}\label{sec:results}
In this section, based on the previously introduced notations and relation $(1-s)\Xi(s)=G_{\mathcal{M}}(s)$ in \eqref{relation_G}, we formulate the main results of the work.

\begin{theorem}\label{thm:main_eq}
Let's consider the GPDTRM defined in \eqref{model} and suppose that the net profit condition $\E X<\ka$ holds. Then, the local probabilities of random variables $\mathcal{M}$ and $X$ satisfy the following two equalities:
\begin{align}
\label{eq:main_eq}
G_{\M}(s)(s^\kappa-G_X(s))&=\sum_{i=0}^{\kappa-1}\pi_i\sum_{j=0}^{\kappa-1-i}x_j(s^\kappa-s^{i+j}),\,|s|\leq1,\\
\label{eq:main_eq_mean}
\kappa-\mathbb{E}X&=\sum_{i=0}^{\kappa-1}\pi_i\sum_{j=0}^{\kappa-1-i}x_j(\kappa-i-j).
\end{align}
\end{theorem}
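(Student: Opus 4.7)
The plan is to combine the Lindley-type identity $\M\dist(\M+X-\ka)^+$---the classical stationarity property of the reflected random walk maximum already cited in the paper---with probability generating function manipulations. Under the net profit condition, $\M$ is a.s.\ finite and independent of a generic copy of $X$, and $S:=\M+X-\ka$ is an integer-valued random variable taking values in $\{-\ka,-\ka+1,\ldots\}$ since $X\geq 0$ and $\M\geq 0$. The reflection $z\mapsto z^+$ affects only $k\in\{-\ka,\ldots,-1\}$: for $k\geq 1$ one has $\T(S^+=k)=\T(S=k)$, while $\T(S^+=0)=\T(S\leq 0)$ absorbs all mass on the negative side.

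Next I would compute $\mathbb{E}\,s^{S^+}$ in two ways. By the Lindley identity, $\mathbb{E}\,s^{S^+}=G_\M(s)$. Directly, splitting according to the sign of $S$ and using independence of $\M$ and $X$ to write $\mathbb{E}\,s^S=s^{-\ka}G_\M(s)G_X(s)$, one obtains
\begin{align*}
\mathbb{E}\,s^{S^+} = s^{-\ka}G_\M(s)G_X(s) + \sum_{k=-\ka}^{-1}\T(S=k)\,(1-s^k).
\end{align*}
Equating the two expressions and multiplying through by $s^\ka$ yields
\begin{align*}
G_\M(s)\bigl(s^\ka-G_X(s)\bigr) = \sum_{k=-\ka}^{-1}\T(S=k)\,(s^\ka-s^{k+\ka}).
\end{align*}
Substituting $\ell=k+\ka\in\{0,\ldots,\ka-1\}$ and expanding the convolution $\T(\M+X=\ell)=\sum_{i+j=\ell,\,i,j\geq 0}\pi_i\,x_j$ by independence rewrites the right-hand side as $\sum_{i=0}^{\ka-1}\pi_i\sum_{j=0}^{\ka-1-i}x_j(s^\ka-s^{i+j})$, which is \eqref{eq:main_eq}.

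For \eqref{eq:main_eq_mean}, I observe that both sides of \eqref{eq:main_eq} vanish at $s=1$, since $G_X(1)=1$ and each factor $s^\ka-s^{i+j}$ vanishes there. Differentiating both sides once in $s$ and setting $s=1$ picks up the derivatives of the vanishing factors only: the left-hand side collapses to $G_\M(1)(\ka-\mathbb{E}X)=\ka-\mathbb{E}X$, and the right-hand side to $\sum_{i=0}^{\ka-1}\pi_i\sum_{j=0}^{\ka-1-i}x_j(\ka-i-j)$. The main obstacle is the careful bookkeeping in the second step: the map $s\mapsto s^+$ converts each negative power $s^k$ (with $k<0$) into $1$, and identifying the resulting correction terms with the distribution of $\M+X$ restricted to $\{\M+X\leq\ka-1\}$ is the essential technical move; once that identification is made, the rest is algebraic rearrangement together with a standard L'H\^opital-style limit.
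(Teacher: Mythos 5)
Your derivation of \eqref{eq:main_eq} is correct and is essentially the paper's own argument in a light disguise: the paper conditions on the value of $\M$ and splits the sum at $i=\ka$, while you condition on the value of $S=\M+X-\ka$ and split at $0$; your convolution step $\T(\M+X=\ell)=\sum_{i+j=\ell}\pi_i x_j$ is exactly the expansion the paper performs when it rewrites $\mathbb{E}s^{(X+i-\ka)^{+}+\ka}-s^{i}G_X(s)$ as $\sum_{j=0}^{\ka-1-i}x_j(s^{\ka}-s^{i+j})$. No objection there.

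The gap is in \eqref{eq:main_eq_mean}. Differentiating the left-hand side gives $G'_{\M}(s)(s^{\ka}-G_X(s))+G_{\M}(s)(\ka s^{\ka-1}-G'_X(s))$, and your claim that setting $s=1$ ``picks up the derivatives of the vanishing factors only'' silently assumes that $G'_{\M}(1^{-})$ is finite, i.e.\ that $\E\M<\infty$. By the Kiefer--Wolfowitz result cited in the paper \cite{Kiefer}, $\E\M<\infty$ if and only if $\E X^2<\infty$, and the theorem does not assume $\E X^2<\infty$; removing that hypothesis from the earlier result of \cite{GJ} is in fact stated in the introduction as one of the contributions of this paper. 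When $\E X^2=\infty$ the first term is an indeterminate $\infty\cdot 0$ form as $s\to1^{-}$, and your closing reference to ``a standard L'H\^opital-style limit'' does not dispose of it: one application of L'H\^opital turns the limit into $\lim_{s\to1^{-}}\bigl(\ka s^{\ka-1}-G'_X(s)\bigr)\big/\bigl(-G''_{\M}(s)/(G'_{\M}(s))^2\bigr)$, whose numerator tends to the nonzero constant $\ka-\E X$, so you still must prove $(G'_{\M}(s))^2/G''_{\M}(s)\to0$; that is precisely the non-trivial estimate $\limsup_{s\to1^{-}}(G'_{\M}(s))^2/G''_{\M}(s)\leq\frac{N}{N-1}\sum_{i\geq N}\pi_i$ from \cite[Lemma 5.5]{GJS} that the paper invokes. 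A way to repair your argument without any such lemma is to divide \eqref{eq:main_eq} by $1-s$ before taking $s\to1^{-}$: the right-hand side is a polynomial, $\lim_{s\to1^{-}}(s^{\ka}-G_X(s))/(1-s)=-(\ka-\E X)$ by monotone convergence, and $G_{\M}(1^{-})=1$ since $\M$ is a.s.\ finite under the net profit condition; but as written your proof has a hole exactly at the point where the paper works hardest.
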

We prove Theorem \ref{thm:main_eq} in Section \ref{sec:proof_of_thm}. 

    Equality \eqref{eq:main_eq} implies the following relation among the local probabilities $\pi_0$, $\pi_1$, $\ldots$

\begin{corollary}\label{cor:pi}
Let $\pi_i=\T (\mathcal{M}=i),\,i\in\mathbb{N}_0$ and $F_X(u)=\sum_{i=0}^{u}x_i,\,u\in\N_0$ be the distribution function of r.v. X. Then, for $\ka\in \N$, the following equalities hold:
\begin{align}\label{eq:pi_relation}
&\pi_\ka x_0=\pi_0-\sum_{i=0}^{\ka-1}\pi_iF_X(\ka-i),\\\nonumber    
&\pi_nx_0=\pi_{n-\ka}-\sum_{i=0}^{\ka-1}\pi_ix_{n-i}, \,n=\ka+1,\,\ka+2,\,\ldots
\end{align}
\end{corollary}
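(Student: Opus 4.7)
The plan is to prove Corollary~\ref{cor:pi} directly from Theorem~\ref{thm:main_eq}: the two displayed equalities are precisely the coefficient relations obtained by expanding \eqref{eq:main_eq} as a power series in $s$ around $s=0$ and equating coefficients of $s^n$ for $n=\ka$ and $n>\ka$, respectively. No further probabilistic input is required, since the whole identity lives in the ring of formal (or analytic) power series on $|s|\leq 1$.

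First, I would expand the left-hand side of \eqref{eq:main_eq}. Since
\[
G_\M(s)\,s^\ka=\sum_{n=\ka}^\infty \pi_{n-\ka}\,s^n
\quad\text{and}\quad
G_\M(s)G_X(s)=\sum_{n=0}^\infty\Bigl(\sum_{i=0}^n\pi_i x_{n-i}\Bigr)s^n,
\]
the coefficient of $s^n$ on the LHS equals $-\sum_{i=0}^n\pi_i x_{n-i}$ for $n<\ka$ and $\pi_{n-\ka}-\sum_{i=0}^n\pi_i x_{n-i}$ for $n\geq \ka$.

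Second, I would expand the right-hand side. The piece $\sum_{i=0}^{\ka-1}\pi_i\sum_{j=0}^{\ka-1-i}x_j\cdot s^\ka$ contributes only to the coefficient of $s^\ka$, with value $\sum_{i=0}^{\ka-1}\pi_i F_X(\ka-1-i)$. The piece $-\sum_{i=0}^{\ka-1}\pi_i\sum_{j=0}^{\ka-1-i}x_j s^{i+j}$ satisfies $i+j\leq \ka-1$, so it contributes only at $s^n$ with $0\leq n\leq \ka-1$; reindexing by $n=i+j$ yields coefficient $-\sum_{i=0}^n\pi_i x_{n-i}$ for such $n$. In particular, the RHS has zero coefficient at every $s^n$ with $n>\ka$.

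Third, I would match coefficients. For $n<\ka$ the identity collapses to the tautology $0=0$. For $n=\ka$, equating LHS and RHS gives
\[
\pi_0-\sum_{i=0}^{\ka}\pi_i x_{\ka-i}=\sum_{i=0}^{\ka-1}\pi_i F_X(\ka-1-i);
\]
isolating $\pi_\ka x_0$ and using the telescoping identity $F_X(\ka-i)=F_X(\ka-1-i)+x_{\ka-i}$ to merge the two sums yields the first claimed equality. For $n>\ka$, the RHS coefficient is zero, which forces $\pi_{n-\ka}=\sum_{i=0}^{n}\pi_i x_{n-i}$; peeling off the $i=n$ term produces the recursion $\pi_n x_0=\pi_{n-\ka}-\sum_{i=0}^{n-1}\pi_i x_{n-i}$, equivalent to the second claimed equality. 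The only step that needs any bookkeeping is the telescoping merger at $n=\ka$; the rest is routine series manipulation, so there is no substantive obstacle.
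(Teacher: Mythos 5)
Your proposal is correct and is essentially the paper's own proof: equating Taylor coefficients of \eqref{eq:main_eq} at $s=0$ is exactly the paper's device of taking the $n$-th derivative of both sides and letting $s\to0$, and your telescoping step $F_X(\ka-i)=F_X(\ka-1-i)+x_{\ka-i}$ at $n=\ka$ matches the paper's indicator-term bookkeeping. One remark: both you and the paper's proof arrive at $\pi_n x_0=\pi_{n-\ka}-\sum_{i=0}^{n-1}\pi_i x_{n-i}$ for $n>\ka$, whereas the corollary as printed has upper summation limit $\ka-1$; that is an apparent typo in the statement rather than a gap in your argument.
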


We explain the implication of Corollary \ref{cor:pi} in Section \ref{sec:proof_of_thm}.

    Let's turn to the survival probabilities $\vphi(1),\,\vphi(2),\,\ldots$ generating function $\Xi(s)$. It is easy to see that equalities \eqref{relation_G} and \eqref{eq:main_eq} imply

\begin{align}\label{eq:gen_f}
\Xi(s)=\frac{\sum_{i=0}^{\kappa-1}\pi_i\sum_{j=0}^{\kappa-1-i}x_j(s^\kappa-s^{i+j})}{(1-s)(s^\kappa-G_X(s))}.
\end{align}
Therefore, the similar way as the recurrence \eqref{eq:recurrence} requires the initial values of $\vphi(0)$, $\vphi(1)$, $\ldots$, $\vphi(\ka-1)$, the generating function $\Xi(s)$ in \eqref{eq:gen_f} (the equality \eqref{eq:pi_relation} as well) requires $\pi_0,\,\pi_1,\,\ldots,\,\pi_{\ka-1}$, $\ka\in\N$. These local probabilities of $\mathcal{M}$ can be solved out from relations \eqref{eq:main_eq} and \eqref{eq:main_eq_mean} and this is achievable as provided in items {\bf (i)}-{\bf (iv)} below:

\bigskip

({\bf i}) {\it We can choose such $|s|\leq1$ that the left hand-side of \eqref{eq:main_eq} vanishes, i.e. the roots of $s^\ka=G_X(s)$.}

\bigskip

({\bf i.1})
{\it If the net profit condition $G_X'(1)=\E X<\ka$ holds and the greatest common divisor of powers of $s$ in $s^\ka=G_X(s)$ is one, there are exactly $\ka-1$ roots of $s^\ka=G_X(s)$ in $|s|<1$ counted with their multiplicities. This fact is implied by Rouch\'e's theorem and estimate $|G_{X}(s)|<|\lambda s^\ka|$ when $\lambda>1$ and $|s|=1$, which means that the both functions $\la s^\ka$ and $\la s^\ka-G_X(s)$ have $\kappa$ zeros in $|s|<1$. When $\la\to1^+$, there is always one root out of those $\ka$ in $|s|<1$ migrating to $s=1$ \textup{(}$s=1$ is always the root of $s^\ka=G_X(s)$\textup{)} and some to other boundary points $|s|=1$ \textup{(}roots of unity\textup{)} if the greatest common divisor of powers of $s$ in $s^\ka=G_X(s)$ is greater than one, see \cite[Chapter 10]{Rudin}, \cite[Remark 10]{GJS} and \cite[Section 4, Lemma 9 and 10 therein]{GJ}}.

\bigskip

({\bf ii}) {\it Let $\al\neq1$ be the root of $s^\ka=G_X(s)$ in $|s|\leq1$ and denote $\bm{\pi}:=(\pi_0,\,\pi_1,\,\ldots,\,\pi_{\ka-1})^T$, where $T$ denotes the transpose. Then, by \eqref{eq:main_eq} and
$$
\left(\al^j+\al^{j+1}+\ldots+\al^{\ka-1}\right)(\al-1)=
\al^\ka-\al^j,\,j\in\{0,\,1,\,\ldots,\,\ka-1\},
$$
it holds that
\begin{align*}
&0=\left(
\sum_{j=0}^{\kappa-1}x_j(\al^\kappa-\al^j),\,\sum_{j=0}^{\kappa-2}x_j(\al^\kappa-\al^{j+1}),\,\ldots,\,x_0(\al^\kappa-\al^{\kappa-1})
\right)
\bm{\pi}\\
&=\left(
\sum_{j=0}^{\kappa-1}x_j\sum_{i=j}^{\ka-1}\al^i,\,\sum_{j=0}^{\kappa-2}x_j\sum_{i=j+1}^{\ka-1}\al^i,\,\ldots,\,x_0\al^{\kappa-1}
\right)
\bm{\pi}\\
&=\left(
\sum_{j=0}^{\kappa-1}\al^jF_X(j),\,\sum_{j=0}^{\kappa-2}\al^{j+1}F_X(j),\,\ldots,\,\al^{\kappa-1}x_0
\right)
\bm{\pi}=
\sum_{i=0}^{\ka-1}\pi_i\sum_{j=0}^{\ka-1-i}\al^{j+i}F_X(j)
,
\end{align*}
where $F_X(u)=\T(X\leqslant u)=\sum_{i=0}^{u}x_i,\,u\in \mathbb{N}_0$ is the distribution function of r.v. $X$.}

\bigskip

({\bf iii}) {\it Let $\al_1,\,\ldots,\,\al_{\kappa-1}\neq1$ be the roots of $s^\kappa=G_X(s)$ in $|s|\leq1$. Then, by {\bf (i)}, {\bf (ii)} and \eqref{eq:main_eq_mean},
\begin{align}\label{eq:system}\nonumber
&\begin{pmatrix}
\sum_{j=0}^{\kappa-1}\al_1^jF_X(j)&\sum_{j=0}^{\kappa-2}\al_1^{j+1}F_X(j)&\ldots&\al_1^{\kappa-1}x_0\\
\vdots&\vdots&\ddots&\vdots\\
\sum_{j=0}^{\kappa-1}\al_{\ka-1}^jF_X(j)&\sum_{j=0}^{\kappa-2}\al_{\ka-1}^{j+1}F_X(j)&\ldots&\al_{\ka-1}^{\kappa-1}x_0\\
\sum_{j=0}^{\kappa-1}x_j(\kappa-j)&\sum_{j=0}^{\kappa-2}x_j(\kappa-j-1)&\ldots&x_0
\end{pmatrix}
\begin{pmatrix}
&\pi_0\\
&\vdots\\
&\pi_{\kappa-2}\\
&\pi_{\kappa-1}
\end{pmatrix}
\\
&\hspace{9cm}=
\begin{pmatrix}
&0\\
&\vdots\\
&0\\
&\kappa-\mathbb{E}X
\end{pmatrix}.
\end{align}

If $A\bm{\pi}=B$ denotes the system \eqref{eq:system}, $x_0>0$ and $\al_1,\,\al_2,\,\ldots,\,\al_{\ka-1}\neq1$ are the roots of multiplicity one, then, according to Lemma \ref{lem:non_singular}, the determinant $|A|\neq0$ and $\bm{\pi}=A^{-1}B$, where $A^{-1}$ is the inverse matrix of $A$.}

\bigskip

({\bf iv}) {\it Suppose the root $\al\neq1$ of $s^{\ka}=G_X(s)$ in $|s|\leq1$ is of multiplicity $l\in\{2,\,3,\,\ldots,\,\ka-1\},\,\ka\geq3$. Then, according to equality \eqref{eq:main_eq} in Theorem \ref{thm:main_eq} and {\bf (ii)}, derivatives 
\begin{align}\label{eq:deriv}
\frac{d^{m}}{ds^{m}}\left(\sum_{i=0}^{\kappa-1}\pi_i\sum_{j=0}^{\kappa-1-i}s^{j+i}F_X(j)\right)\Bigg|_{s=\al}=0
\text{ for all }
m\in\{0,\,1,\,\ldots,\,l-1\}
\end{align}
and, in order to avoid identical lines in matrix $A$, we can set up the modified system \eqref{eq:system} by replacing its lines (except the last one) by the corresponding derivatives \eqref{eq:deriv}. If $x_0>0$, such a modified main matrix $A$ remains non-singular, see Lemma \ref{lem:non_s_der}.}

\bigskip

{\sc Note 1:} {\it The condition $x_0>0$ does not loose generality. If $\T (X>j)=1$ for some $j\in\{0,\,1,\,\ldots,\,\ka-2\},\,\ka\geq2$ and the net profit condition remains valid, then there reduces the order of recurrence in \eqref{eq:recurrence} and consequently some terms in sums of \eqref{eq:main_eq} and \eqref{eq:main_eq_mean} vanish causing the corresponding adjustments in system \eqref{eq:system} or its modified version described in {\bf (iv)}. We then end up dividing by some $x_{j+1}$ instead of $x_0$ where needed, c.f. \eqref{u0} or \cite[Theorem 7]{GS_sym}. In addition, we observe that $\T (X>\ka-1)$ implies $\E X\geq \ka$. Also, the both sides of $s^\ka=G_X(s)$ can be canceled by some power of $s\neq0$ if $\T(X>j)=1$ for some $j\in\{0,\,1,\,\ldots,\,\ka-2\},\,\ka\geq2$.
}

\bigskip

We further denote by $|A|$ the determinant of the matrix $A$ where $M_{i,\,j}$, $i,\,j\in\{1,\,2,\,\ldots,\,\ka\}$, $\ka\in\N$ are its minors and the matrix $A$ is the main matrix in \eqref{eq:system} or its modification replacing the coefficients by derivatives as described in ({\bf iv}). 

The equality \eqref{eq:gen_f} and thoughts listed in ({\bf i})-({\bf iv}) allow to formulate the following statement.

\begin{theorem}\label{thm:gen}
Let $|s|<1$ and $s^\ka-G_X(s)\neq0$. If the net profit condition $\E X<\ka$ holds, then the survival probability $\vphi(u+1),\,u\in\mathbb{N}_0$ generating function is
\begin{align}\label{eq:gen_f_final}
\Xi(s)=
\frac{\ka-\E X}{G_X(s)-s^\ka}
\sum_{i=0}^{\ka-1}\tilde{\pi}_i\sum_{j=0}^{\kappa-1-i}s^{j+i}F_X(j),
\end{align}
where $\tilde{\pi}_i=\pi_i/(\ka-\E X)$,
\begin{align*}
\tilde{\pi}_0=\frac{(-1)^{\ka+1}M_{\ka,\,1}}{|A|},\,
\tilde{\pi}_1=\frac{(-1)^{\ka+2}M_{\ka,\,2}}{|A|},\,
\ldots,\,
\tilde{\pi}_{\ka-1}=\frac{M_{\ka,\,\ka}}{|A|},
\end{align*}
and the matrix $A$ is created as provided in \textup{({\bf i})-({\bf iv})}.

    More over, the initial values for recurrence \eqref{eq:recurrence}, including $\vphi(\ka)$, are
\begin{align*}
&\vphi(0)=\frac{\ka-\E X}{|A|}\sum_{i=1}^{\ka}(-1)^{\ka+i}M_{\ka,\,i}\, F_X(\ka-i),\\
&\vphi(u)=\frac{\ka-\E X}{|A|}\sum_{i=1}^{u}(-1)^{\ka+i}M_{\ka,\,i},\,
u=1,\,2,\,\ldots,\,\ka.\\
\end{align*}
\end{theorem}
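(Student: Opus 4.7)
The plan is to obtain \eqref{eq:gen_f_final} from \eqref{eq:gen_f} by an algebraic simplification, and then extract all the remaining assertions by Cramer's rule applied to the system \eqref{eq:system} combined with the relation $\vphi(u+1)=\sum_{i=0}^{u}\pi_i$ from \eqref{eq:phi_pi} and the recurrence \eqref{u0}.

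First, I would remove the $1-s$ factor from the denominator of \eqref{eq:gen_f}. The key algebraic observation is the factorization
\[
s^{\ka}-s^{i+j}=(s-1)\sum_{l=i+j}^{\ka-1}s^{l},
\]
which is just a telescoping rewriting of the identity already used in item \textup{({\bf ii})}. Plugging this into the numerator of \eqref{eq:gen_f}, swapping the order of summation in $j$ and $l$ for fixed $i$ (so that $j$ runs from $0$ to $l-i$), recognizing $\sum_{j=0}^{l-i}x_j=F_X(l-i)$, and re-indexing $l\mapsto j+i$ converts that numerator into $-(1-s)\sum_{i=0}^{\ka-1}\pi_i\sum_{j=0}^{\ka-1-i}s^{j+i}F_X(j)$. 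The $(1-s)$ factor then cancels with the denominator and, introducing $\tilde\pi_i=\pi_i/(\ka-\E X)$, formula \eqref{eq:gen_f_final} follows.

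Next, I would solve explicitly for $\tilde\pi_0,\ldots,\tilde\pi_{\ka-1}$ by Cramer's rule. Under the hypotheses, items \textup{({\bf i})}-\textup{({\bf iv})} and Lemmas \ref{lem:non_singular}-\ref{lem:non_s_der} supply the non-singularity of the main matrix $A$ in \eqref{eq:system} or of its derivative-modified version from \textup{({\bf iv})}, so $\bm\pi=A^{-1}B$ with $B=(0,\ldots,0,\ka-\E X)^T$. Since $B$ has only its last component non-zero, replacing the $(i+1)$-th column of $A$ by $B$ and expanding the resulting determinant along that very column yields $\pi_i=(\ka-\E X)(-1)^{\ka+i+1}M_{\ka,\,i+1}/|A|$; dividing by $\ka-\E X$ gives exactly the claimed expressions for $\tilde\pi_0,\ldots,\tilde\pi_{\ka-1}$.

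Finally, to obtain the initial values $\vphi(0),\vphi(1),\ldots,\vphi(\ka)$, I would combine \eqref{eq:phi_pi} with the formulas just derived. For $u=1,2,\ldots,\ka$, the identity $\vphi(u)=\sum_{i=0}^{u-1}\pi_i$ together with the expression for $\pi_i$ from the previous step and a trivial re-indexing $i\mapsto i-1$ yields the stated formula at once. For the remaining value $\vphi(0)$, I would feed the computed $\vphi(i)$, $i=1,\ldots,\ka$, into the recurrence \eqref{u0}, namely $\vphi(0)=\sum_{i=1}^{\ka}x_{\ka-i}\vphi(i)$, swap the double sum so that for fixed $j$ the index $i$ runs from $j$ to $\ka$, and recognize $\sum_{i=j}^{\ka}x_{\ka-i}=F_X(\ka-j)$ to close the computation. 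The only genuinely non-trivial input is the non-vanishing of $|A|$, which must be quoted from Lemmas \ref{lem:non_singular}-\ref{lem:non_s_der}; the rest is bookkeeping, and the main hurdle is arranging the double summations in the first and last steps cleanly enough that the distribution function $F_X$ emerges naturally from sums of the $x_j$'s.
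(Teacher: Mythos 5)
Your proposal is correct and follows essentially the same route as the paper: cancel the $(1-s)$ factor via the telescoping identity from item ({\bf ii}), solve the system \eqref{eq:system} by cofactors (your Cramer's rule expansion along the replaced column is the same computation as the paper's adjugate-matrix formula, since $B$ has a single non-zero entry), and recover $\vphi(1),\ldots,\vphi(\ka)$ from $\vphi(u+1)=\sum_{i=0}^{u}\pi_i$ and $\vphi(0)$ from the recurrence \eqref{u0} with the summation swap producing $F_X$. All the index manipulations check out, and you correctly identify the non-vanishing of $|A|$ (Lemmas \ref{lem:non_singular} and \ref{lem:non_s_der}) as the one substantive input.
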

We prove Theorem \ref{thm:gen} in Section \ref{sec:proof_of_thm}.

\bigskip

{\sc Note 2}: {\it We agree that for $\ka=1$ the matrix $A=(x_0)$, its determinant $|A|=x_0$ and the minor $M_{1,\,1}=1$. Recall that $x_0$ gets replaced by some $x_{j+1}$ if $\T(X>j)=1$ for some $j\in\{0,\,1,\,\ldots,\,\ka-2\},\,\ka\geq2$ and the net profit condition holds, see {\sc Note 1}.
}

The next statement provides possible expressions of $\tilde{\pi}_0,\,\tilde{\pi}_1,\,\ldots,\,\tilde{\pi}_{\ka-1}$ and $\vphi(0),\,\vphi(1),\,\ldots, \vphi(\ka)$, $\ka\in\N$.

\begin{theorem}\label{thm:pi_expr}
Suppose that $x_0>0$ and $\al_1,\,\al_2,\,\ldots,\,\al_{\ka-1}\neq1$ are the roots of multiplicity one of $s^\ka=G_X(s)$ in $|s|\leq1$. Then, the values $\tilde{\pi_i}=\pi_i/(\ka-\E X)$ for $i=0,\,1,\,\ldots,\,\ka-1$ admit the following representation: 
\begin{align*}
&\tilde{\pi}_0=\frac{1}{x_0}\prod_{j=1}^{\ka-1}\frac{\al_j}{\al_j-1},\\
&\tilde{\pi}_1=-\frac{
\sum_{1\leq  j_1<\ldots<j_{\ka-2}\leq\ka-1}
\al_{j_1}\cdots\al_{j_{\ka-2}}}{x_0\prod_{j=1}^{\ka-1}(\al_j-1)}
-\frac{F_X(1)}{x_0}\tilde{\pi}_0,\\
&\tilde{\pi}_2=\frac{
\sum_{1\leq  j_1<\ldots<j_{\ka-3}\leq\ka-1}
\al_{j_1}\cdots\al_{j_{\ka-3}}}{x_0\prod_{j=1}^{\ka-1}(\al_j-1)}
-\frac{F_X(2)}{x_0}\tilde{\pi}_0-\frac{F_X(1)}{x_0}\tilde{\pi}_1,\\
&\vdots\\
&\tilde{\pi}_{\ka-1}=\frac{(-1)^{\ka+1}}{x_0}\prod_{j=1}^{\ka-1}\frac{1}{\al_j-1}-\frac{1}{x_0}\sum_{i=0}^{\ka-2}\tilde{\pi}_i F_X(\ka-1-i),\,\ka\geq2
\end{align*}

and the initial values for recurrence \eqref{eq:recurrence}, including $\vphi(\ka)$, are:

\begin{align*}
&\tilde{\vphi}(0)=(-1)^{\ka+1}\prod_{j=1}^{\ka-1}\frac{1}{\al_j-1},
\,\,\,\tilde{\vphi}(1)=\frac{1}{x_0}\prod_{j=1}^{\ka-1}\frac{\al_j}{\al_j-1},\\
&\tilde{\vphi}(2)=-\frac{F_X(1)}{x_0}\tilde{\vphi}(1)+\prod_{j=1}^{\ka-1}\frac{1/x_0}{\al_j-1}\left(\prod_{j=1}^{\ka-1}\al_j-\sum_{1\leq  j_1<\ldots<j_{\ka-2}\leq\ka-1}
\al_{j_1}\cdots\al_{j_{\ka-2}}\right),\\
&\tilde{\vphi}(3)=-\frac{F_X(1)}{x_0}\tilde{\vphi}(2)-\frac{F_X(2)}{x_0}\tilde{\vphi}(1)+\prod_{j=1}^{\ka-1}\frac{1/x_0}{\al_j-1}\\
&\times\left(\prod_{j=1}^{\ka-1}\al_j-\sum_{1\leq  j_1<\ldots<j_{\ka-2}\leq\ka-1}
\al_{j_1}\cdots\al_{j_{\ka-2}}
+\sum_{1\leq  j_1<\ldots<j_{\ka-3}\leq\ka-1}
\al_{j_1}\cdots\al_{j_{\ka-3}}\right),\\
&\hspace{7cm}\vdots
\end{align*}
\begin{align*}
&\tilde{\vphi}(\ka)=-\frac{1}{x_0}\sum_{i=1}^{\ka-1}F_X(\ka-i)\tilde{\vphi}(i)
+\prod_{j=1}^{\ka-1}\frac{1/x_0}{\al_j-1}
\Bigg(\prod_{j=1}^{\ka-1}\al_j-\\
&\sum_{1\leq  j_1<\ldots<j_{\ka-2}\leq\ka-1}
\al_{j_1}\cdots\al_{j_{\ka-2}}
+\sum_{1\leq  j_1<\ldots<j_{\ka-3}\leq\ka-1}
\al_{j_1}\cdots\al_{j_{\ka-3}}+\ldots+(-1)^{\ka+1}\Bigg),
\end{align*}
$\ka\geq2$,
where 
$$
\tilde{\vphi}(i)=\frac{\vphi(i)}{(\ka-\E X)},\, i\in\{0,\,1,\,\ldots,\,\ka\}.
$$ 
\end{theorem}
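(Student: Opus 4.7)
My plan is to identify the polynomial appearing on the right-hand side of \eqref{eq:main_eq}, factor it via its $\ka-1$ known roots, and then read off the $\tilde\pi_i$ and $\tilde\vphi(i)$ by comparing coefficients. Define
\begin{align*}
Q(s) := \sum_{i=0}^{\ka-1}\pi_i\sum_{j=0}^{\ka-1-i}s^{i+j}F_X(j),
\end{align*}
which has degree at most $\ka-1$. The factorization $s^\ka - s^{i+j} = (s-1)\sum_{k=0}^{\ka-i-j-1}s^{i+j+k}$ used in item (\textbf{ii}) rewrites the right-hand side of \eqref{eq:main_eq} as $(s-1)Q(s)$; combined with \eqref{relation_G} this gives the compact identity $\Xi(s)(G_X(s)-s^\ka)=Q(s)$. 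By item (\textbf{ii}), $Q(\al_l)=0$ for each of the $\ka-1$ simple roots $\al_l\neq 1$ of $s^\ka=G_X(s)$ in $|s|\leq 1$, which together with $\deg Q\leq\ka-1$ forces $Q(s)=c\prod_{l=1}^{\ka-1}(s-\al_l)$ for some constant $c$. Evaluating at $s=1$ and using the identity $\sum_{j=0}^{\ka-1-i}F_X(j)=\sum_{j=0}^{\ka-1-i}x_j(\ka-i-j)$ together with \eqref{eq:main_eq_mean} gives $Q(1)=\ka-\E X$, so
\begin{align*}
c=\frac{(-1)^{\ka-1}(\ka-\E X)}{\prod_{j=1}^{\ka-1}(\al_j-1)}.
\end{align*}

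For the $\tilde\pi_m$ formulas I compare the coefficient of $s^m$ in the two expressions for $Q(s)$: the definition gives $\sum_{i=0}^{m}\pi_iF_X(m-i)$ while the factorization gives $(-1)^{\ka-1-m}\,c\,e_{\ka-1-m}(\al_1,\ldots,\al_{\ka-1})$, where $e_k$ denotes the $k$-th elementary symmetric polynomial. Isolating $\pi_m F_X(0)=\pi_m x_0$, dividing by $\ka-\E X$, and substituting $c/(\ka-\E X)=(-1)^{\ka-1}/\prod(\al_j-1)$ collapses the signs and produces the uniform recurrence
\begin{align*}
\tilde\pi_m x_0 = \frac{(-1)^{m}\, e_{\ka-1-m}(\al_1,\ldots,\al_{\ka-1})}{\prod_{j=1}^{\ka-1}(\al_j-1)} - \sum_{i=0}^{m-1}\tilde\pi_i F_X(m-i), \quad m=0,1,\ldots,\ka-1,
\end{align*}
which specializes to the stated formulas for $\tilde\pi_0,\tilde\pi_1,\ldots,\tilde\pi_{\ka-1}$.

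For the $\tilde\vphi(u)$ formulas I instead extract the coefficient of $s^n$ on both sides of $\Xi(s)(G_X(s)-s^\ka)=Q(s)$ for $n\leq\ka-1$ (so the $s^\ka$ term contributes nothing), obtaining $\sum_{j=0}^{n}\vphi(n+1-j)x_j=(-1)^{\ka-1-n}\,c\, e_{\ka-1-n}$. Summing this identity over $n=0,\ldots,u-1$ and swapping summation order twice, first to $\sum_{j=0}^{u-1}x_j\sum_{n=j}^{u-1}\vphi(n+1-j)$ and then to $\sum_{m=1}^{u}\vphi(m)F_X(u-m)$, converts the $x_j$ into $F_X$-values and, after dividing by $\ka-\E X$, yields
\begin{align*}
\sum_{m=1}^{u}\tilde\vphi(m)F_X(u-m)=\frac{1}{\prod_{j=1}^{\ka-1}(\al_j-1)}\sum_{n=0}^{u-1}(-1)^{n}e_{\ka-1-n}(\al_1,\ldots,\al_{\ka-1}).
\end{align*}
Isolating the $m=u$ summand $\tilde\vphi(u)x_0$ gives the recursive formulas for $\tilde\vphi(2),\ldots,\tilde\vphi(\ka)$. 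The base value $\tilde\vphi(1)=\tilde\pi_0$ is immediate, and $\tilde\vphi(0)=(-1)^{\ka+1}/\prod(\al_j-1)$ follows from \eqref{u0}, since a short manipulation gives $\vphi(0)=\sum_{i=0}^{\ka-1}\pi_iF_X(\ka-1-i)$, which is precisely the leading coefficient $c$ of $Q$.

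The calculation is largely mechanical; the main obstacle is careful sign bookkeeping, where the factor $(-1)^{\ka-1}$ coming from $\prod(\al_j-1)$ versus $\prod(1-\al_l)$ must combine with the alternating $(-1)^{\ka-1-m}$ accompanying $e_{\ka-1-m}$ to leave the clean $(-1)^m$ and $(-1)^n$ patterns of the theorem, and the double summation swap turning $x_j$ into $F_X(u-m)$ has to be carried out accurately.
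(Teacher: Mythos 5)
Your argument is correct, and it reaches the result by a genuinely different route than the paper. The paper starts from the Cramer-rule representation $\tilde{\pi}_i=(-1)^{\ka+1+i}M_{\ka,\,i+1}/|A|$ of Theorem \ref{thm:gen} and evaluates each minor $M_{\ka,\,j}$ by the same column reductions used in Lemma \ref{lem:non_singular}, so the elementary symmetric polynomials emerge as Vandermonde determinants with a column deleted. You instead note that the right-hand side of \eqref{eq:main_eq} equals $(s-1)Q(s)$ with $Q(s)=\sum_{i=0}^{\ka-1}\pi_i\sum_{j=0}^{\ka-1-i}s^{i+j}F_X(j)$ of degree at most $\ka-1$, that $Q$ vanishes at the $\ka-1$ simple roots $\al_l\neq1$ (item ({\bf ii})), and that $Q(1)=\ka-\E X$ by \eqref{eq:main_eq_mean}; this determines $Q=c\prod_l(s-\al_l)$ completely and the formulas drop out by comparing coefficients. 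Your route bypasses Lemma \ref{lem:non_singular}, the minors, and Theorem \ref{thm:gen} altogether and makes the appearance of the $e_k(\al_1,\ldots,\al_{\ka-1})$ transparent, while the paper's determinant route is the one that extends mechanically to multiple roots via the derivative rows of Lemma \ref{lem:non_s_der} (your factorization would adapt as well, since a multiple root of $s^\ka=G_X(s)$ forces $Q$ to vanish to the same order). I verified the individual steps: the identity $\sum_{j=0}^{\ka-1-i}F_X(j)=\sum_{j=0}^{\ka-1-i}x_j(\ka-i-j)$, the coefficient of $s^m$ in $Q$ being $\sum_{i=0}^{m}\pi_iF_X(m-i)$, and the double summation swap producing $\sum_{m=1}^{u}\vphi(m)F_X(u-m)$ are all correct. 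One remark: your derivation yields the prefactor $\frac{1}{x_0}\prod_{j=1}^{\ka-1}\frac{1}{\al_j-1}$ in the $\tilde{\vphi}(u)$ formulas for $u\geq2$, whereas the theorem prints $\prod_{j=1}^{\ka-1}\frac{1/x_0}{\al_j-1}=x_0^{-(\ka-1)}\prod_{j=1}^{\ka-1}\frac{1}{\al_j-1}$; these agree only for $\ka=2$. Your version is the one consistent with summing the stated $\tilde{\pi}_i$ and with the paper's $\ka=3$ numerical example, so the discrepancy is a typo in the theorem statement, not a defect in your proof.
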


We prove Theorem \ref{thm:pi_expr} in Section \ref{sec:proof_of_thm}.

\bigskip

{\sc Note 3:} We define $\prod_{j=1}^{0}(\cdot)= \sum_{1\leq j_1<j_0\leq\ldots}(\cdot)=1$ in Theorem \ref{thm:pi_expr}. 

\bigskip

In view of Theorems \ref{thm:gen} and \ref{thm:pi_expr} we give several separate expressions on $\Xi(s)$.

\begin{corollary}\label{cor:xi}
If $\kappa=1$, then
\begin{align*}
\Xi(s)=\frac{1-\E X}{G_X(s)-s}.  
\end{align*}
If $\kappa=2$ and $x_0>0$, then
\begin{align*}
\Xi(s)=\frac{2-\E X}{\al-1}\cdot\frac{\al-s}{G_X(s)-s^2},
\end{align*}
where $\al\in[-1,0)$ is the unique root of $G_X(s)=s^2$.\\
If $\kappa=2$ and $x_0=0,\,x_1>0$, then
\begin{align*}
\Xi(s)=\frac{2-\E X}{\tilde{G}_{X}(s)-s},
\end{align*}
where $\tilde{G}_{X}(s)=\sum_{i=0}^{\infty}x_{i+1}s^i,\, |s|\leq1$.
\end{corollary}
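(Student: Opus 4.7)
The strategy is to specialize the master formula \eqref{eq:gen_f_final} of Theorem \ref{thm:gen} in each of the three cases, using Theorem \ref{thm:pi_expr} for the values of $\tilde\pi_i$, and handling the degenerate case $x_0=0$ by a pathwise reduction to $\ka=1$. In every case only the numerator $\sum_{i=0}^{\ka-1}\tilde\pi_i\sum_{j=0}^{\ka-1-i}s^{j+i}F_X(j)$ needs to be simplified. For $\ka=1$, Note 2 forces $A=(x_0)$, $|A|=x_0$, $M_{1,1}=1$, hence $\tilde\pi_0=1/x_0$; the double sum reduces to the single term $\tilde\pi_0\,F_X(0)=1$, yielding $\Xi(s)=(1-\E X)/(G_X(s)-s)$.

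For $\ka=2$ with $x_0>0$, item \textbf{(i.1)} guarantees the existence of a root $\al\ne1$ of $s^2=G_X(s)$ in $|s|\leq1$; reality and location in $[-1,0)$ follow from the intermediate value theorem applied to $s^2-G_X(s)$ (negative at $s=0$, non-negative at $s=-1$), and uniqueness holds because item \textbf{(i.1)} supplies at most one such root. Theorem \ref{thm:pi_expr}, together with the conventions of Note 3, gives
\[
\tilde\pi_0=\frac{1}{x_0}\cdot\frac{\al}{\al-1},\qquad
\tilde\pi_1=-\frac{1}{x_0(\al-1)}-\frac{F_X(1)}{x_0}\tilde\pi_0.
\]
Substituting into the bracketed sum, the two $F_X(1)$ contributions (from $\tilde\pi_0 s F_X(1)$ and from $\tilde\pi_1 s F_X(0)$) cancel exactly, and the bracket collapses to
\[
\frac{\al}{\al-1}-\frac{s}{\al-1}=\frac{\al-s}{\al-1},
\]
which yields the stated expression for $\Xi(s)$.

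For $\ka=2$ with $x_0=0$ and $x_1>0$ the formulas of Theorem \ref{thm:pi_expr} degenerate (one would divide by $x_0=0$), so the cleanest route is a pathwise reduction: since $\T(X\geq1)=1$, put $Y_i:=X_i-1$; these are i.i.d.\ non-negative integer valued random variables whose probability generating function is $\tilde G_X(s)=\sum_{i\geq0}x_{i+1}s^i$, and the identity $\sum_{i=1}^n(X_i-2)=\sum_{i=1}^n(Y_i-1)$ shows that the events in the definition \eqref{ult_prob} of $\vphi$ coincide for the two models. The $\ka=2$ survival function for $X$ therefore equals the $\ka=1$ survival function for $Y$; since $\E Y=\E X-1<1$, Case $\ka=1$ applied to $Y$ delivers $\Xi(s)=(1-\E Y)/(\tilde G_X(s)-s)=(2-\E X)/(\tilde G_X(s)-s)$.

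The only genuine computation is the $F_X(1)$-cancellation in the $\ka=2,\,x_0>0$ case: one must check that the $F_X(1)$ term buried inside $\tilde\pi_1$ exactly annihilates the contribution from the $sF_X(1)$ factor multiplying $\tilde\pi_0$, leaving only the clean linear expression $(\al-s)/(\al-1)$. Once this cancellation is identified, the remainder is bookkeeping, as the $\ka=1$ case is a one-term sum and the $\ka=2,\,x_0=0$ case reduces to $\ka=1$ by the shift $X=Y+1$.
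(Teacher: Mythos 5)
Your proposal is correct and follows essentially the same route as the paper, which simply states that the three expressions are implied by Theorem \ref{thm:gen} (together with the known fact that $s^2=G_X(s)$, $x_0>0$ has a unique real root $\al\in[-1,0)$); you have merely filled in the computations the paper leaves implicit. Your $F_X(1)$-cancellation in the $\ka=2$, $x_0>0$ case checks out, and your shift $Y_i=X_i-1$ for the $x_0=0$ case is exactly the reduction described in the paper's {\sc Note 1}.
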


We explain the implication of Corollary \ref{cor:xi} in Section \ref{sec:proof_of_thm}.

\section{Lemmas}\label{sec:lemas}

In this section we formulate and prove several auxiliary statements needed to derive the main results stated in Section \ref{sec:results}.

\begin{lemma}\label{same_dist}
The random variable $$\mathcal{M}=\sup_{n\geqslant1}\left(\sum_{i=1}^{n}\left(X_i-\ka\right)\right)^+,$$
where $x^+=\max\{0,\,x\}$ is the positive part of $x\in\R$,
admits the following distribution property
$$(\mathcal{M}+X-\kappa)^+\dist \mathcal{M}.$$
\end{lemma}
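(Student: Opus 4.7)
The plan is to use the standard ``prepend a step to the random walk'' argument for Lindley-type recursions. Set $Y_i=X_i-\ka$ and $S_n=\sum_{i=1}^n Y_i$, with the convention $S_0=0$. Since $\sup_{n\geq 1}S_n^+=\max(0,\sup_{n\geq 1}S_n)$, I can rewrite
$$\mathcal{M}=\sup_{n\geq 0}S_n.$$
Let $X$ be independent of $(X_i)_{i\geq 1}$ with the same distribution, and write $Y=X-\ka$. First I would compute
$$(\mathcal{M}+Y)^+=\max\!\left(0,\ Y+\sup_{n\geq 0}S_n\right)=\max\!\left(0,\ \sup_{n\geq 0}(Y+S_n)\right),$$
using that adding the constant $Y$ (conditionally on $Y$) commutes with taking the supremum.

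Next I would perform the re-indexing trick: define a new i.i.d.\ sequence by putting $Y$ in the first slot, namely $Y_1':=Y$ and $Y_i':=Y_{i-1}$ for $i\geq 2$. These are i.i.d.\ copies of $Y_1$ by independence of $X$ from $(X_i)$ and the i.i.d.\ assumption on $(X_i)$. Letting $S_n'=\sum_{i=1}^n Y_i'$, one checks that $Y+S_n=S_{n+1}'$ for every $n\geq 0$, hence
$$(\mathcal{M}+Y)^+=\max\!\left(0,\ \sup_{m\geq 1}S_m'\right)=\sup_{m\geq 1}(S_m')^+.$$

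Finally, because $(Y_i')_{i\geq 1}$ has the same joint law as $(Y_i)_{i\geq 1}$, the functional $\sup_{m\geq 1}(S_m')^+$ has the same distribution as $\sup_{m\geq 1}S_m^+=\mathcal{M}$, which gives the stated identity. I would add a brief sentence noting that the argument is distributional and hence valid whether or not $\mathcal{M}<\infty$ a.s.; under the net profit condition $\E X<\ka$ invoked elsewhere this is automatic, but it is not needed for this lemma.

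The only step requiring care is the re-indexing: one must be explicit that $(Y,Y_1,Y_2,\ldots)$ and $(Y_1,Y_2,\ldots)$ have the same law (shift-invariance of i.i.d.\ sequences), and one must correctly identify $Y+S_n$ with the partial sum $S_{n+1}'$ of the shifted walk. No analytic difficulty is anticipated; the result is essentially the Feller/Lindley recursion tailored to the non-negative, integer-valued setting of the paper.
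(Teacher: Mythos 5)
Your argument is correct and is essentially the paper's own proof: both rest on the identity $\max\{0,\max\{0,\sup_n S_n\}+Y\}=\max\{0,\sup_{n\geqslant 0}(Y+S_n)\}$ followed by the shift-invariance of the i.i.d.\ sequence, i.e.\ prepending $X$ as a new first step. Your version merely makes the re-indexing $(Y,Y_1,Y_2,\ldots)\dist(Y_1,Y_2,\ldots)$ and the independence of $X$ from $\mathcal{M}$ explicit, which the paper leaves implicit.
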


\begin{proof}
The proof is straight forward according to the definition of $\mathcal{M}$ and basic properties of maximum. Indeed,
\begin{align*}
&(\mathcal{M}+X-\kappa)^+=\max\left\{0,\,\max\left\{0,\,\sup_{n\geqslant1}\sum_{i=1}^{n}\left(X_i-\ka\right)\right\}+X-\ka\right\}\\
&\dist\max\left\{0,\,\max\left\{X_1-\ka,\,\sup_{n\geqslant2}\sum_{i=1}^{n}\left(X_i-\ka\right)\right\}\right\}\\
&\dist\max\left\{0,\,\sup_{n\geqslant1}\sum_{i=1}^{n}\left(X_i-\ka\right)\right\}=\mathcal{M}.
\end{align*}
See also, \cite[Lemma 5.2]{GJS}, \cite[Lemma 25]{GJ} and \cite[page 198]{Feller}.
\end{proof}

\begin{lemma}\label{lem:non_singular}
Let $\al_1,\,\ldots,\,\al_{\ka-1}\neq1$ be the roots of multiplicity one of $s^\ka=G_X(s)$ in the region $|s|\leq1$ and suppose that the local probability $x_0$ is positive. Then, the determinant $|A|$ of the main matrix in \eqref{eq:system} is 
$$
|A|=\frac{x_0^{\kappa}}{(-1)^{\kappa+1}}\prod_{j=1}^{\kappa-1}(\al_j-1)\prod_{1\leqslant i<j\leqslant \ka-1}(\al_j-\al_i)\neq0.
$$

\end{lemma}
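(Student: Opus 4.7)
The plan is to recognize the matrix $A$ as a product $V \cdot C$ of a $\kappa \times \kappa$ Vandermonde matrix with a lower‑triangular coefficient matrix, both of whose determinants can be read off directly. First I would introduce the auxiliary polynomials
$$P_i(s) := \sum_{j=0}^{\kappa-1-i} F_X(j)\, s^{j+i}, \qquad i = 0, 1, \ldots, \kappa-1.$$
The manipulation already carried out in item \textbf{(ii)} of Section \ref{sec:results} shows that the $(i{+}1)$-th entry in row $k \leq \kappa-1$ equals $P_i(\al_k)$. Next I would verify, by a single summation-by-parts, the identity
$$\sum_{j=0}^{\kappa-1-i} x_j(\kappa - i - j) = \sum_{j=0}^{\kappa-1-i} F_X(j) = P_i(1),$$
which identifies the bottom row of $A$ as $(P_0(1), P_1(1), \ldots, P_{\kappa-1}(1))$. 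Conceptually this amounts to dividing \eqref{eq:main_eq} by $(s-1)$ and evaluating at $s=1$, so $s = 1$ plays the role of the ``missing'' $\kappa$-th root of $s^\kappa = G_X(s)$. Setting $\al_\kappa := 1$, the matrix takes the uniform shape $A_{k,\,i+1} = P_i(\al_k)$ for all $k = 1, \ldots, \kappa$.

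Expanding $P_i(s) = \sum_{m=0}^{\kappa-1} c_{m,i}\, s^m$ with $c_{m,i} = F_X(m-i)$ for $m \geq i$ and $c_{m,i} = 0$ otherwise, I obtain the factorization $A = V C$, where $V_{k,\,m+1} = \al_k^m$ is the Vandermonde matrix on the nodes $\al_1, \ldots, \al_{\kappa-1}, 1$ and $C_{m+1,\,i+1} = c_{m,i}$ is lower triangular with constant diagonal $F_X(0) = x_0$. Consequently $\det C = x_0^\kappa$, and the standard Vandermonde formula gives
$$\det V = \prod_{1 \leq i < j \leq \kappa}(\al_j - \al_i) = \prod_{i=1}^{\kappa-1}(1 - \al_i)\cdot\prod_{1 \leq i < j \leq \kappa-1}(\al_j - \al_i).$$
Multiplying the two determinants and extracting the sign $(-1)^{\kappa-1} = (-1)^{\kappa+1}$ from the first product yields the claimed expression for $|A|$. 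Non-vanishing is then immediate from the hypotheses $x_0 > 0$, $\al_k \neq 1$, and the distinctness of the simple roots $\al_1, \ldots, \al_{\kappa-1}$. The only step requiring genuine care is the summation-by-parts identification $P_i(1) = \sum_j x_j(\kappa - i - j)$, which puts the last row on the same footing as the others; once that is in hand, the $V \cdot C$ decomposition and the evaluation of the two determinants are routine.
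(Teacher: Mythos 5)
Your proposal is correct and is essentially the paper's argument in a tidier package: the paper performs the column eliminations that your lower-triangular factor $C$ encodes, arriving at the same $x_0^{\kappa}$ times the Vandermonde determinant on the nodes $\al_1,\ldots,\al_{\ka-1},1$, and your summation-by-parts identity $\sum_{j=0}^{\ka-1-i}x_j(\ka-i-j)=P_i(1)$ (which checks out) is exactly what makes the paper's last row behave like evaluation at the ``missing root'' $s=1$. All steps, including the sign bookkeeping $(-1)^{\ka-1}=(-1)^{\ka+1}$ and the non-vanishing conclusion, are sound.
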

\begin{proof}
Let us calculate the determinant $|A|=$
$$
\begin{vmatrix}
\sum_{j=0}^{\kappa-1}\al_1^jF_X(j)&\sum_{j=0}^{\kappa-2}\al_1^{j+1}F_X(j)&\ldots&\al_1^{\ka-2}x_0+\al_1^{\ka-1}F_{X}(1)&\al_1^{\kappa-1}x_0\\
\vdots&\vdots&\ddots&\vdots&\vdots\\
\sum_{j=0}^{\kappa-1}\al_{\ka-1}^jF_X(j)&\sum_{j=0}^{\kappa-2}\al_{\ka-1}^{j+1}F_X(j)&\ldots&\al_{\ka-1}^{\ka-2}x_0+\al_{\ka-1}^{\ka-1}F_{X}(1)&\al_{\ka-1}^{\kappa-1}x_0\\
\sum_{j=0}^{\kappa-1}x_j(\kappa-j)&\sum_{j=0}^{\kappa-2}x_j(\kappa-j-1)&\ldots&2x_0+x_1&x_0
\end{vmatrix}.
$$
We first put forward $x_0$ form the last column. Then, multiplying the last column by $F_X(\ka-1),\,F_X(\ka-2),\,\ldots,\,F_X(1)$ respectively and subtracting it from the first, the second and etc. columns, we obtain
$$
|A|=x_0\begin{vmatrix}
\sum_{j=0}^{\kappa-2}\al_1^jF_X(j)&\sum_{j=0}^{\kappa-3}\al_1^{j+1}F_X(j)&\ldots&\al_1^{\kappa-2}x_0&\al_1^{\kappa-1}\\
\vdots&\vdots&\ddots&\vdots&\vdots\\
\sum_{j=0}^{\kappa-2}\al_{\kappa-1}^jF_X(j)&\sum_{j=0}^{\kappa-3}\al_{\ka-1}^{j+1}F_X(j)&\ldots&\al_{\ka-2}^{\kappa-2}x_0&\al_{\kappa-1}^{\kappa-1}\\
\sum_{j=0}^{\kappa-2}x_j(\kappa-j-1)&\sum_{j=0}^{\kappa-3}x_j(\kappa-j-2)&\ldots&x_0&1
\end{vmatrix}.
$$
Proceeding the similar with the penultimate column of the last determinant and so on and applying the basic determinant properties, we obtain that $|A|$ equals to
\begin{align*}
x_0^{\kappa}\begin{vmatrix}
1&\al_1&\ldots&\al_1^{\kappa-1}\\
\vdots&\vdots&\ddots&\vdots\\
1&\al_{\kappa-1}&\ldots&\al_{\kappa-1}^{\kappa-1}\\\\
1&1&\ldots&1
\end{vmatrix}
&=
\frac{x_0^{\kappa}}{(-1)^{\kappa+1}}
\begin{vmatrix}
\al_1-1&\al_1^2-1&\ldots&\al_1^{\kappa-1}-1\\
\al_2-1&\al_2^2-1&\ldots&\al_2^{\kappa-1}-1\\
\vdots&\vdots&\ddots&\vdots\\
\al_{\kappa-1}-1&\al_{\kappa-1}^2-1&\ldots&\al_{\kappa-1}^{\kappa-1}-1
\end{vmatrix}\\
&=\frac{x_0^{\kappa}}{(-1)^{\kappa+1}}\prod_{j=1}^{\kappa-1}(\al_j-1)
\begin{vmatrix}
1&\al_1&\ldots&\al_1^{\kappa-2}\\
1&\al_2&\ldots&\al_2^{\kappa-2}\\
\vdots&\vdots&\ddots&\vdots\\
1&\al_{\kappa-1}&\ldots&\al_{\kappa-1}^{\kappa-2}
\end{vmatrix}.
\end{align*}
The last determinant is nothing but the well known Vandermonde determinant, see for example \cite[Section 6.1]{On_Vondermonde}. Thus,
$$
|A|=\frac{x_0^{\kappa}}{(-1)^{\kappa+1}}\prod_{j=1}^{\kappa-1}(\al_j-1)\prod_{1\leqslant i<j\leqslant \ka-1}(\al_j-\al_i)\neq0
$$
because the roots $\al_1,\,\al_2,\,\ldots,\,\al_{\ka-1}$ are distinct and lie in the region $|s|\leq1$, $s\neq1$. Note that 
$$
\prod_{j=1}^{0}(\cdot)=\prod_{1\leqslant i<j\leqslant 0}(\cdot)=\prod_{1\leqslant i<j\leqslant 1}(\cdot)=1
$$
by definition.
\end{proof}

\begin{lemma}\label{lem:non_s_der}
Let $|s|\leq1$. Suppose some roots $\al_1,\,\ldots,\,\al_{\ka-1}\neq1$ of $G_X(s)=s^\ka$ are multiple and assume that the local probability $x_0$ is positive. Then, the modified main matrix in \eqref{eq:system}, replacing its lines (except the last one) by derivatives \eqref{eq:deriv}, remains non-singular.
\end{lemma}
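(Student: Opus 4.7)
The plan is to apply to the modified matrix exactly the same sequence of column operations used in the proof of Lemma \ref{lem:non_singular} and reduce the resulting determinant to a confluent Vandermonde determinant. Those operations — factoring $x_0$ out of the successive columns and subtracting $F_X(\cdot)$-multiples of one column from the others — have coefficients depending only on $x_0$ and on $F_X(\cdot)$, hence they realise a fixed linear transformation on the columns that is applied uniformly to every row, including the derivative rows \eqref{eq:deriv} that now replace some simple-root rows.

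The key intermediate observation is the polynomial identity implicit in Lemma \ref{lem:non_singular}: the column operations transform the polynomial row $(c_0(s),\ldots,c_{\kappa-1}(s))$ with $c_k(s)=\sum_{j=0}^{\kappa-1-k}s^{j+k}F_X(j)$ into $(1,s,s^2,\ldots,s^{\kappa-1})$ as an identity in $s$, while a factor $x_0^{\kappa}$ is pulled out of the determinant. One verifies this by a short induction on $k$ descending from $\kappa-1$ to $0$, using $c_k(s)=s^kx_0+\sum_{j=1}^{\kappa-1-k}s^{j+k}F_X(j)$. Because differentiation in $s$ commutes with constant-coefficient column operations, the identity implies that a derivative row of order $m$ at a multiple root $\alpha$ is sent to $\frac{d^m}{ds^m}(1,s,\ldots,s^{\kappa-1})\bigr|_{s=\alpha}$; each simple-root row at $\alpha_i$ still reduces to $(1,\alpha_i,\ldots,\alpha_i^{\kappa-1})$; and the bottom row, whose $i$-th entry is $\sum_{j=0}^{\kappa-1-i}x_j(\kappa-i-j)=c_i(1)$ by a direct computation, reduces to $(1,1,\ldots,1)$.

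Thus, up to the nonzero prefactor $x_0^{\kappa}$, the modified matrix becomes a confluent Vandermonde matrix whose distinct evaluation nodes are the distinct values among $\alpha_1,\ldots,\alpha_{\kappa-1}$ (with their root multiplicities) together with the extra node $1$ of multiplicity one. Its determinant admits the classical closed form — a nonzero product of factorials times $\prod_{i<j}(\beta_j-\beta_i)^{l_il_j}$ over the distinct nodes $\beta_i$ with multiplicities $l_i$ — which is nonzero because all $\alpha_i\neq1$ and the distinct root-values are mutually distinct. The main obstacle is establishing the polynomial identity $(c_0(s),\ldots,c_{\kappa-1}(s))\mapsto(1,s,\ldots,s^{\kappa-1})$ as a true identity in $s$ rather than merely an evaluation at the specific roots; once that is in hand, the reduction to the confluent Vandermonde determinant and the conclusion $|A|\neq0$ follow immediately.
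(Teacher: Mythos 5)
Your proof is correct, and it takes a genuinely different route from the paper. The paper argues by perturbation: it replaces the repeated node $\al_1$ by $\al_1+\delta$, invokes the explicit determinant formula of Lemma \ref{lem:non_singular} (which holds as a polynomial identity in the nodes, not only at actual roots) to see that the perturbed matrix is non-singular for small $\delta\neq0$, and then obtains the derivative row as a limit of difference quotients, the factor $\delta$ in the Vandermonde product cancelling against the division of the row by $\delta$. You instead carry out the column reduction of Lemma \ref{lem:non_singular} directly on the modified matrix, observing that the operations have coefficients depending only on $x_0$ and $F_X(\cdot)$ and therefore commute with $\frac{d^m}{ds^m}$, so that the rows \eqref{eq:deriv} reduce to $\frac{d^m}{ds^m}(1,s,\ldots,s^{\ka-1})\big|_{s=\al}$ and the whole matrix becomes, up to the factor $x_0^{\ka}$, a confluent Vandermonde matrix with nodes the distinct roots (with multiplicities) together with $1$; its classical closed form is nonzero since all $\al_i\neq1$. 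Your verification that the bottom row equals $c_i(1)$ and the remark that the reduction must be established as an identity in $s$ rather than an evaluation at the roots are exactly the right points to insist on. What your approach buys is an explicit closed-form value for the modified determinant (the confluent analogue of Lemma \ref{lem:non_singular}) and no limiting argument; what the paper's approach buys is brevity, at the cost of leaving implicit that the determinant formula extends to non-root nodes and that the limit of the rescaled determinants is indeed the determinant of the limit matrix.
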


\begin{proof}
In short, the statement follows because derivative is the linear mapping. More precisely, if $\al_1$ is of multiplicity two, let's say, then there exists such sufficiently close to zero $\delta\in\R\setminus\{0\}$ that the matrix with the replaced second line

\begin{align}\label{matrix_delta}
\begin{pmatrix}
\sum_{j=0}^{\kappa-1}\al_1^jF_X(j)&\sum_{j=0}^{\kappa-2}\al_1^{j+1}F_X(j)&\ldots&\al_1^{\kappa-1}x_0\\
\sum_{j=0}^{\kappa-1}(\al_1+\delta)^jF_X(j)&\sum_{j=0}^{\kappa-2}(\al_1+\delta)^{j+1}F_X(j)&\ldots&(\al_1+\delta)^{\kappa-1}x_0\\
\vdots&\vdots&\ddots&\vdots\\
\sum_{j=0}^{\kappa-1}\al_{\ka-1}^jF_X(j)&\sum_{j=0}^{\kappa-2}\al_{\ka-1}^{j+1}F_X(j)&\ldots&\al_{\ka-1}^{\kappa-1}x_0\\
\sum_{j=0}^{\kappa-1}x_j(\kappa-j)&\sum_{j=0}^{\kappa-2}x_j(\kappa-j-1)&\ldots&x_0
\end{pmatrix}
\end{align}
is non-singular, see the expression of determinant in Lemma \ref{lem:non_singular}. Then, subtracting the second line from the first in \eqref{matrix_delta}, dividing the first line by $\delta$ afterwards and letting $\delta\to0$, we get the desired line replacement by derivative. 

The proof is analogous for higher derivatives and/or more multiple roots.
\end{proof}

\section{Proofs of the main results}\label{sec:proof_of_thm}

In this section we prove the statements formulated in Section \ref{sec:results}. Let's start with the proof of Theorem \ref{thm:main_eq}. 

\begin{proof}[Proof of Theorem \ref{thm:main_eq}]
By Lemma \ref{same_dist} and the rule of total expectation
\begin{align*}
G_{\M}(s)&=\mathbb{E}s^{\left(\M+X-\kappa\right)^+} 
=\mathbb{E}\left(\mathbb{E}\left(s^{\left(\M+X-\kappa\right)^+}|\M\right)\right)\\
&=\sum_{i=0}^{\kappa-1}\pi_i\mathbb{E}s^{\left(i+X-\kappa\right)^+}
+s^{-\kappa}G_X(s)\sum_{i=\kappa}^{\infty}\pi_is^i\\
&=\sum_{i=0}^{\kappa-1}\pi_i\left(\mathbb{E}s^{\left(X+i-\kappa\right)^+}
-s^{i-\kappa}G_X(s)\right)+s^{-\kappa}G_X(s)G_{\M}(s),
\end{align*}
which implies equality (\ref{eq:main_eq})
\begin{align*}
G_{\M}(s)(s^\kappa-G_X(s))&=\sum_{i=0}^{\kappa-1}\pi_i
(\mathbb{E}s^{(X+i-\kappa)^++\kappa}-s^iG_X(s))\\
&=\sum_{i=0}^{\kappa-1}\pi_i\sum_{j=0}^{\kappa-1-i}x_j(s^\kappa-s^{i+j}).
\end{align*}

To prove the second equality (\ref{eq:main_eq_mean}) in Theorem \ref{thm:main_eq} we take $s$ derivative of both sides of the derived equality \eqref{eq:main_eq}

\begin{align*}
&S_1+S_2:=G'_{\M}(s)(s^\kappa-G_X(s))+G_{\M}(s)(\kappa s^{\kappa-1}-G'_X(s))\\
&=\sum_{i=0}^{\kappa-1}\pi_i\sum_{j=0}^{\kappa-1-i}x_j(\kappa s^{\kappa-1}-(i+j)s^{i+j-1})=:S_3.
\end{align*}

We now let $s\to1^-$ in the last equality. It is easy to see that
\begin{align*}
\lim_{s\to1^-}S_3=\sum_{i=0}^{\kappa-1}\pi_i\sum_{j=0}^{\kappa-1-i}x_j(\kappa-i-j)  
\end{align*}
and 
\begin{align*}
\lim_{s\to1^-}S_2=\ka-\E X,
\end{align*}
because the net profit condition $\E X<\ka$ holds. Before calculating $\lim_{s\to1^-}S_1$, we observe that $\E X^2=\infty \,\Leftrightarrow\, \mathbb{E} \mathcal{M} = \infty$ and $\E X^2<\infty \,\Leftrightarrow\, \mathbb{E} \mathcal{M} < \infty$, see \cite[Theorem 5 and 6]{Kiefer}. Therefore, the requirement $\E X^2<\infty$ implies $\lim_{s\to1^-}S_1=0$ immediately. However, $\lim_{s\to1^-}S_1=0$ in spite $\mathbb{E} \mathcal{M} = \infty$. Indeed, if $G'_{\mathcal{M}}(s)\to\infty$ as $s\to1^-$, then
$$
\lim_{s\to1^-}S_1=\lim_{s\to1^-}\frac{s^\ka-G_X(s)}{1/G'_{\mathcal{M}}(s)}=
\lim_{s\to1^-}\frac{\ka s^{\ka-1}-G'_X(s)}{-G''_{\mathcal{M}}(s)/\left(G'_{\mathcal{M}}(s)\right)^2},
$$
where
$$
\limsup_{s\to1^-}\frac{\left(G'_{\mathcal{M}}(s)\right)^2}{G''_{\mathcal{M}}(s)}\leq\frac{N}{N-1}\sum_{i=N}^{\infty}\pi_i
$$
for any $N\in\{2,\,3,\,\ldots\}$, see \cite[Lemma 5.5]{GJS}.
Thus, equality (\ref{eq:main_eq_mean}) follows and the theorem is proved.
\end{proof}

\begin{proof}[Proof of Corollary \ref{cor:pi}]
The $n$'th derivative of both sides of equality (\ref{eq:main_eq}) and $s\to0$ gives
\begin{align*}
\pi_n x_0=\pi_{n-\kappa}-\sum_{i=0}^{n-1}\pi_ix_{n-i}
-\sum_{i=0}^{\kappa-1}\pi_i\sum_{j=0}^{\kappa-1-i}x_j
\mathbbm{1}_{\{n=\kappa\}},\,n=\kappa,\,\kappa+1,\,\ldots
\end{align*}
or
\begin{align*}
\pi_n x_0=\pi_{n-\kappa}-\sum_{i=0}^{n-1}\pi_ix_{n-i}
,\,n=\kappa+1,\,\kappa+2,\,\ldots
\end{align*}
\end{proof}
\begin{proof}[Proof of Theorem \ref{thm:gen}]
For $s^\ka-G_X(s)\neq0$, equality \eqref{eq:gen_f} and division by $1-s$ (see ({\bf ii}) in Section \ref{sec:results}) imply
\begin{align*}
\Xi(s)&=\frac{\sum_{i=0}^{\ka-1}\pi_i\sum_{j=0}^{\ka-1-i}s^{j+i}F_X(j)}{G_X(s)-s^\ka}\\
&=\frac{1}{G_X(s)-s^\ka}
\left(
\sum_{j=0}^{\kappa-1}s^jF_X(j),\,\sum_{j=0}^{\kappa-2}s^{j+1}F_X(j),\,\ldots,\,s^{\kappa-1}x_0
\right)
\begin{pmatrix}
\pi_0\\\pi_1\\
\vdots\\
\pi_{\ka-1}
\end{pmatrix}.
\end{align*}

By system \eqref{eq:system}, including its modified version described in ({\bf iv}) in Section \ref{sec:results}, and recalled notations $\bm{\pi}=(\pi_0,\,\pi_1,\,\ldots,\,\pi_{\ka-1})^T$ and $\tilde{\pi}_i=\pi_i/(\ka-\E X)$, we obtain
\begin{align*}
\bm{\pi}
&=\frac{1}{|A|}
\begin{pmatrix}
M_{1,\,1}&-M_{1,\,2}&\ldots&(-1)^{1+\ka}M_{1,\,\ka}\\
-M_{2,\,1}&M_{2,\,2}&\ldots&(-1)^{2+\ka}M_{2,\,\ka}\\
\vdots&\vdots&\ddots&\vdots\\
(-1)^{\ka+1}M_{\ka,\,1}&(-1)^{\ka+2}M_{\ka,\,2}&\ldots&M_{\ka,\,\ka}
\end{pmatrix}^T
\begin{pmatrix}
0\\
\vdots\\
0\\
\ka-\E X
\end{pmatrix}\\
&=\frac{\ka-\E X}{|A|}
\begin{pmatrix}
(-1)^{\ka+1}M_{\ka,\,1}\\
(-1)^{\ka+2}M_{\ka,\,2}\\
\vdots\\
M_{\ka,\,\ka}
\end{pmatrix}
=(\ka-\E X)
\begin{pmatrix}
\tilde{\pi}_0\\
\tilde{\pi}_1\\
\vdots\\
\tilde{\pi}_{\ka-1}
\end{pmatrix}
.
\end{align*}

Thus, the expression of $\Xi(s)$ in \eqref{eq:gen_f_final} follows.

    The claimed equalities on $\vphi(u)$ for $u=1,\,\ldots,\,\ka$ are evident due to obtained expression of $\bm{\pi}$ and $\vphi(u+1)=\sum_{i=0}^{u}\pi_i,\,u\in\N_0$ provided in \eqref{eq:phi_pi}. It remains to observe that the recurrence \eqref{eq:recurrence} yields
$$
\vphi(0)=\sum_{i=1}^{\ka}x_{\ka-i}\vphi(i)=\sum_{i=0}^{\ka-1}\pi_iF_X(\ka-1-i).
$$
\end{proof}

\begin{proof}[Proof of Theorem \ref{thm:pi_expr}]
Let us recall the matrix
\begin{align*}
A=\begin{pmatrix}
\sum_{j=0}^{\kappa-1}\al_1^jF_X(j)&\sum_{j=0}^{\kappa-2}\al_1^{j+1}F_X(j)&\ldots&\al_1^{\kappa-1}x_0\\
\vdots&\vdots&\ddots&\vdots\\
\sum_{j=0}^{\kappa-1}\al_{\ka-1}^jF_X(j)&\sum_{j=0}^{\kappa-2}\al_{\ka-1}^{j+1}F_X(j)&\ldots&\al_{\ka-1}^{\kappa-1}x_0\\
\sum_{j=0}^{\kappa-1}x_j(\kappa-j)&\sum_{j=0}^{\kappa-2}x_j(\kappa-j-1)&\ldots&x_0
\end{pmatrix}.
\end{align*}
Its determinant, according to Lemma \ref{lem:non_singular},
$$
|A|=\frac{x_0^{\kappa}}{(-1)^{\kappa+1}}\prod_{j=1}^{\kappa-1}(\al_j-1)\prod_{1\leqslant i<j\leqslant \ka-1}(\al_j-\al_i)\neq0.
$$
We now calculate the minors $M_{\ka,\,1},\,M_{\ka,\,2},\,\ldots,\,M_{\ka,\,\ka}$ of $A$. 
Following the calculation of $|A|$ in the proof of Lemma \ref{lem:non_singular}, we get
\begin{align*}
M_{\ka,\,1}
&=\begin{vmatrix}
\sum_{j=0}^{\kappa-2}\al_1^{j+1}F_X(j)&\sum_{j=0}^{\kappa-3}\al_1^{j+2}F_X(j)&\ldots&\al_1^{\kappa-1}x_0\\
\vdots&\vdots&\ddots&\vdots\\
\sum_{j=0}^{\kappa-2}\al_{\kappa-1}^{j+1}F_X(j)&\sum_{j=0}^{\kappa-3}\al_{\ka-1}^{j+2}F_X(j)&\ldots&\al_{\ka-1}^{\kappa-1}x_0
\end{vmatrix}\\
&=
x_0^{\ka-1}\begin{vmatrix}
\al_1&\al_1^2&\ldots&\al_1^{\ka-1}\\
\vdots&\vdots&\ddots&\vdots\\
\al_{\ka-1}&\al_{\ka-1}^2&\ldots&\al_{\ka-1}^{\ka-1}
\end{vmatrix}
=x_0^{\ka-1}\prod_{i=1}^{\ka-1}\al_i\prod_{1\leq i<j\leq\ka-1}\left(\al_j-\al_i\right).
\end{align*}
Note that $M_{\ka,\,1}$ is defined for $\ka\geq1$ and $M_{1,\,1}=1$ by agreement. The next one
\begin{align*}
M_{\ka,\,2}
&=\begin{vmatrix}
\sum_{j=0}^{\kappa-1}\al_1^{j}F_X(j)&\sum_{j=0}^{\kappa-3}\al_1^{j+2}F_X(j)&\ldots&\al_1^{\kappa-1}x_0\\
\vdots&\vdots&\ddots&\vdots\\
\sum_{j=0}^{\kappa-1}\al_{\kappa-1}^{j}F_X(j)&\sum_{j=0}^{\kappa-3}\al_{\ka-1}^{j+2}F_X(j)&\ldots&\al_{\ka-1}^{\kappa-1}x_0
\end{vmatrix}\\
&=x_0^{\ka-2}
\begin{vmatrix}
x_0+\al_1F_X(1)&\al_1^2&\ldots&\al_1^{\ka-1}\\
\vdots&\vdots&\ddots&\vdots\\
x_0+\al_{\ka-1}F_X(1)&\al_{\ka-1}^2&\ldots&\al_{\ka-1}^{\ka-1}
\end{vmatrix}
\\
&=x_0^{\ka-1}\prod_{1\leq i<j\leq\ka-1}(\al_j-\al_i)
\sum_{1\leq  j_1<\ldots<j_{\ka-2}\leq\ka-1}
\al_{j_1}\cdots\al_{j_{\ka-2}}+\frac{F_X(1)}{x_0}M_{\ka,\,1}.
\end{align*}

Similarly as before, $M_{\ka,\,2}$ is defined for $\ka\geq2$ only and $M_{2,\,2}=x_0+F_X(1)\al$, where $\al\in[-1,\,0)$ is the unique root of $s^2=G_X(s)$, see explanation ({\bf i}) in Section \ref{sec:results} and \cite[Section 4 and Corollary 15 therein]{GJ}.

Proceeding,
\begin{align*}
&M_{\ka,\,3}=\\
&\begin{vmatrix}
\sum_{j=0}^{\kappa-1}\al_1^{j}F_X(j)&\sum_{j=0}^{\kappa-2}\al_1^{j+1}F_X(j)&\sum_{j=0}^{\kappa-4}\al_1^{j+3}F_X(j)&\ldots&\al_1^{\kappa-1}x_0\\
\vdots&\vdots&\vdots&\ddots&\vdots\\
\sum_{j=0}^{\kappa-1}\al_{\kappa-1}^{j}F_X(j)&\sum_{j=0}^{\kappa-2}\al_{\kappa-1}^{j+1}F_X(j)&\sum_{j=0}^{\kappa-4}\al_{\ka-1}^{j+3}F_X(j)&\ldots&\al_{\ka-1}^{\kappa-1}x_0
\end{vmatrix}\\
&=x_0^{\ka-2}
\begin{vmatrix}
x_0+\al_1^2F_X(2)&\al_1&\al_1^3&\ldots&\al_1^{\ka-1}\\
\vdots&\vdots&\vdots&\ddots&\vdots\\
x_0+\al_{\ka-1}^2F_X(2)&\al_{\ka-1}&\al_{\ka-1}^3&\ldots&\al_{\ka-1}^{\ka-1}
\end{vmatrix}+\frac{F_X(1)}{x_0}M_{\ka,\,2}\\
&=x_0^{\ka-1}\prod_{1\leq i<j\leq\ka-1}(\al_j-\al_i)
\sum_{1\leq  j_1<\ldots<j_{\ka-3}\leq\ka-1}
\al_{j_1}\cdots\al_{j_{\ka-3}}\\
&\hspace{6cm}-\frac{F_X(2)}{x_0}M_{\ka,\,1}
+\frac{F_X(1)}{x_0}M_{\ka,\,2},\,\ka\geq3
\end{align*}
and so on until the last minor
\begin{align*}
&M_{\ka,\,\ka}
=\begin{vmatrix}
\sum_{j=0}^{\kappa-1}\al_1^{j}F_X(j)&\sum_{j=0}^{\kappa-2}\al_1^{j+1}F_X(j)&\ldots&x_0\al_1^{\ka-2}+\al_1^{\kappa-1}F_X(1)\\
\vdots&\vdots&\ddots&\vdots\\
\sum_{j=0}^{\kappa-1}\al_{\kappa-1}^{j}F_X(j)&\sum_{j=0}^{\kappa-2}\al_{\ka-1}^{j+1}F_X(j)&\ldots&x_0\al_{\ka-1}^{\ka-2}+\al_{\ka-1}^{\kappa-1}F_X(1)
\end{vmatrix}
\end{align*}

\begin{align*}
&=
\begin{vmatrix}
\sum_{j=0}^{\kappa-1}\al_1^{j}F_X(j)&\sum_{j=0}^{\kappa-2}\al_1^{j+1}F_X(j)&\ldots&x_0\al_1^{\ka-2}\\
\vdots&\vdots&\ddots&\vdots\\
\sum_{j=0}^{\kappa-1}\al_{\kappa-1}^{j}F_X(j)&\sum_{j=0}^{\kappa-2}\al_{\ka-1}^{j+1}F_X(j)&\ldots&x_0\al_{\ka-1}^{\ka-2}
\end{vmatrix}
+\frac{F_X(1)}{x_0}M_{\ka,\,\ka-1}=\\
&x_0^{\ka-1}\prod_{1\leq i<j\leq\ka-1}(\al_j-\al_i)
+(-1)^\ka\frac{F_X(\ka-1)}{x_0}M_{\ka,\,1}+(-1)^{\ka+1}
\frac{F_X(\ka-2)}{x_0}M_{\ka,\,2}\\
&\hspace{5cm}+\ldots+(-1)^{2\ka-1}\frac{F_X(2)}{x_0}M_{\ka,\,\ka-2}+\frac{F_X(1)}{x_0}M_{\ka,\,\ka-1}.
\end{align*}
The statement on expressions of $\tilde{\pi}_0,\,\tilde{\pi}_1,\,\ldots,\,\tilde{\pi}_{\ka-1}$ follows dividing the obtained minors with proper sings by determinant $|A|$.

We now prove the claimed formulas of $\vphi(0),\,\vphi(1),\,\ldots,\,\vphi(\ka)$, $\ka\in\N$. By the recurrence \eqref{eq:recurrence} with $u=0$, $\vphi(u+1)=\sum_{i=0}^{u}\pi_i$, $u\in\N_0$ and already proved expression of $\pi_{\ka-1}, \,\ka\in\N$ in Theorem \ref{thm:pi_expr}

\begin{align*}
\vphi(0)=\sum_{i=0}^{\ka-1}\vphi(i+1)x_{\ka-1-i}=\sum_{i=0}^{\ka-1}\pi_iF_X(\ka-1-i)=\frac{\ka-\E X}{(-1)^{\ka+1}}\prod_{j=1}^{\ka-1}\frac{1}{\al_j-1}. 
\end{align*}

The formula for $\vphi(1)$ is evident because $\vphi(1)=\pi_0$, where the expression of $\pi_0$ is already proved in Theorem \ref{thm:pi_expr} too. The rest is clear calculating the sum $\vphi(u+1)=\sum_{i=0}^{u}\pi_i$, $u\in\N_0$, where $\pi_i$ are given in the first part of Theorem \ref{thm:pi_expr}.

\end{proof}

\begin{proof}[Proof of Corollary \ref{cor:xi}]
The provided $\Xi(s)$ expressions are implied by Theorem $\ref{thm:gen}$. Recall that $s^2=G_X(s),\,x_0>0$ has the unique real root $\al\in[-1,\,0)$. In addition, when $x_0>0$, then $\al=-1$ is the root of $s^2=G_X(s)$ iff $\T(X\in2\N_0)=1$, see \cite[Section 4 and Corollary 15 therein]{GJ} and description $({\bf i})$ in Section \ref{sec:results}.
\end{proof}

\section{Particular examples}

In this section we give several examples illustrating the theoretical statements obtained in Section \ref{sec:results}. Required numerical computations are performed by Wolfram Mathematica \cite{Mathematica}

\begin{example}
Suppose the random claim amount $X$ is Bernoulli distributed, i.e. $1-\mathbb{P}(X=0)=p=\mathbb{P}(X=1),\,0<p<1$. We find the ultimate time survival probability generating function $\Xi(s)$ and calculate $\vphi(u),\,u\in\mathbb{N}_0$.
\end{example}

In view of the first part of Corollary \ref{cor:xi} and recurrence \eqref{eq:recurrence}, it is trivial that $\Xi(s)=1/(1-s),\,|s|<1$ and $\vphi(0)=x_0\vphi(1)=1-p$, $\vphi(u)=1$, $u\in\mathbb{N}$. In other words, the ultimate time survival is guaranteed if initial surplus $u\in\mathbb{N}$ and maximal claim size is one in the model $u+n-\sum_{i=1}^{n}X_i$.

\begin{example}
Suppose the random claim amount $X$ is distributed geometrically with parameter $p\in(0,1)$, i.e. $\T(X=k)=p(1-p)^k$, $k=0,\,1,\,\ldots$ and premium rate equals two, i.e. $\ka=2$. We find the ultimate time survival probability generating function $\Xi(s)$ and calculate $\vphi(0)$ and $\vphi(1)$, when the net profit condition is satisfied $\E X<2$.
\end{example}

We start with an observation on the net profit condition $$
\E X=\frac{1-p}{p}<2 \quad \Leftrightarrow \quad \frac{1}{3}<p<1.
$$ 

Then, according to Theorem \ref{thm:main_eq} and description ({\bf i}) in Section \ref{sec:results},
$$
G_X(s)=\frac{p}{1-(1-p)s}=s^2 \Rightarrow
\al:=s=\frac{p-\sqrt{4p-3p^2}}{2(1-p)}\in(-1,0),
$$
when $1/3<p<1$ and, by Corollary \ref{cor:xi} with $\ka=2$ and $x_0=p>0$,
$$
\Xi(s)=\frac{(3p-1)(p-\sqrt{4p-3p^2})}{p(3p-2-\sqrt{4p-3p^2})}\cdot\frac{1-(1-p)s}{(1-s)s^2+(s^3-1)p},
\,\frac{1}{3}<p<1.
$$
For $\ka=2$, $u=0$ and $1/3<p<1$ the recurrence \eqref{eq:recurrence} or Theorem \ref{thm:pi_expr} yields
\begin{align*}
\vphi(0)&=x_1\vphi(1)+x_0\vphi(2)=(1-p)p\,\Xi(0)+p\,\Xi'(0)
=\frac{2-\E X}{1-\al}
\\
&=\frac{3p-2+\sqrt{4p-3p^2}}{2p},\\
\vphi(1)&=\Xi(0)=\frac{2-\E X}{x_0}\frac{\al}{\al-1}  
=\frac{3p-\sqrt{4p-3p^2}}{2p^2}.
\end{align*}

One may check that for $p=101/300$
\begin{align*}
&\vphi(0)=\frac{\sqrt{90597}-297}{202}=0.0197691\ldots,\\
&\vphi(1)=\frac{45450-150\sqrt{90597}}{10201}=0.0295066\ldots
\end{align*}
and that coincides with the approximate values of $\vphi(0)$ and $\vphi(1)$ in \cite[page 12]{GS_sym} obtained via recurrent sequences.

\begin{example}
Let $X$ attain the natural values only, i.e. $x_0=0$, $x_1>0$. Let $\kappa=2$ and assume that the net profit condition is satisfied $\E X<2$. We provide the ultimate time survival probability $\vphi(u)$ formulas for all $u\in\mathbb{N}_0$.  
\end{example}
Let us recall that
$$
\tilde{G}_X(s)=\sum_{i=0}^{\infty}x_{i+1}s^i,\,|s|\leq1.
$$
The recurrence \eqref{eq:recurrence} and Corollary \ref{cor:xi} for $x_0=0$ and $x_1>0$ implies
\begin{align*}
\vphi(0)&=x_1\vphi(1)=2-\E X, \, \vphi(1)=\Xi(0)=\frac{2-\E X}{x_1},\\ \vphi(u)&=\frac{2-\E X}{(u-1)!}\frac{d^{u-1}}{ds^{u-1}}\left(\frac{1}{\tilde{G}_X(s)-s}\right)\Bigg|_{s=0}\\
&=\frac{1}{x_1}\left(\vphi(u-1)-\sum_{i=1}^{u-1}x_{u-i+1}\vphi(i)\right),\,u\geqslant2, 
\end{align*}
which echoes and widens the statement of Theorem 3 in \cite{GS_sym} providing another method of $\vphi(u),\,u\geqslant2$ calculation.

\begin{example}
Suppose the random claim amount $X$ is distributed geometrically with parameter $p=101/300$, i.e. $\T(X=k)=p(1-p)^k$, $k=0,\,1,\,\ldots$ and premium rate equals three, i.e. $\ka=3$. We set up the ultimate time survival probability generating function $\Xi(s)$ and calculate or provide formulas for $\vphi(u),\,u\in\mathbb{N}_0$.
\end{example}

First, we observe that the net profit condition is satisfied $\E X=199/101<3$. We now follow the statement of Theorem \ref{thm:main_eq} and surrounding comments beneath it. Then, for $p=101/300$, the equation
\begin{align*}
G_X(s)=\frac{p}{1-(1-p)s}=s^3    
\end{align*}
has two complex conjugate solutions $\al_1:=-0.368094 + 0.522097\bm{i}$ and $\al_2:=-0.368094 - 0.522097\bm{i}$ inside the unit circle $|s|<1$. Then, by Theorem \ref{thm:gen},
$$
\Xi(s)=\frac{\sum_{i=0}^{2}\pi_i\sum_{j=0}^{2-i}s^{i+j}F_X(j)}{s^3-G_X(s)},
$$
where $(\pi_0,\,\pi_1,\,\pi_2)=(0.582072,\, 0.0818989,\,0.0658497)$ is the unique solution of
\begin{align*}
\begin{pmatrix}
x_0+F_X(1)\al_1+F_X(2)\al_1^2&x_0\al_1+F_X(1)\al_1^2&x_0\al_1^2\\
x_0+F_X(1)\al_2+F_X(2)\al_2^2&x_0\al_2+F_X(1)\al_2^2&x_0\al_2^2\\
3x_0+2x_1+x_2&2x_0+x_1&x_0
\end{pmatrix}
\begin{pmatrix}
\pi_0 \\ \pi_1 \\ \pi_2
\end{pmatrix}
=
\begin{pmatrix}
0 \\ 0 \\ 3-\E X
\end{pmatrix}
\end{align*}
with appropriate numerical characteristics of the provided distribution. Theorem \ref{thm:pi_expr} and recurrence \eqref{eq:recurrence} imply:
\begin{align*}
&\vphi(0)=\sum_{i=1}^{3}x_{3-i}\vphi(i)=\frac{3-\E X}{(1-\al_1)(1-\al_2)}=0.480212\ldots,\\
&\vphi(1)=\pi_0=\frac{(3-\E X)\al_1\al_2}{x_0(1-\al_1)(1-\al_2)}=0.582072\ldots,\\
&\vphi(2)=\pi_0+\pi_1=(3-\E X)\frac{x_0(\al_1+\al_2)+x_1\al_1\al_2}{x_0^2(\al_1-1)(1-\al_2)}=0.663971\ldots,\\
&\vphi(3)=\pi_0+\pi_1+\pi_2=0.729821\ldots,\\
&\text{where }\pi_2=\frac{3-\E X}{x_0}\left(\frac{1}{(\al_1-1)(\al_2-1)}-\sum_{i=0}^{1}\pi_i F_X(2-i)\right),\\
&\vphi(u)=\frac{1}{x_0}\left(\vphi(u-3)-\sum_{i=1}^{u-1}x_{u-i}\vphi(i)\right)=\frac{d^{u-1}}{d s^{u-1}}\frac{\Xi(s)}{(u-1)!}\Bigg|_{s=0},\,u\geqslant3.
\end{align*}
The provided values of $\vphi(0),\,\vphi(1),\,\vphi(2)$ and $\vphi(3)$ coincide with the ones given in \cite[page 14]{GS_sym}, where they are obtained approximately from a certain recurrent sequences.

\begin{example}
Let $x_0=0.128$, $x_1=0.576$, $x_2=0.264$, $x_3=0.032$, $\sum_{i=0}^{3}x_i=1$ and $\ka=3$. We set up the ultimate time survival probability generating function $\Xi(s)$ and calculate $\vphi(u),\,u\in\mathbb{N}_0$.
\end{example}
For the provided distribution $\E X=1.2<3$ and the equation 
$$
0.128+0.576s+0.264s^2+0.032s^2=s^3
$$
has one root $s=-4/11=:\al$ of multiplicity two. Then, according to Theorem \ref{thm:main_eq} and comments ({\bf i})-({\bf iv}) beneath it, we create the modified system replacing the second line by the corresponding derivatives

\begin{align*}
\begin{pmatrix}
x_0+F_X(1)\al+F_X(2)\al^2&x_0\al+F_X(1)\al^2&x_0\al^2\\
F_X(1)+2F_X(2)\al&x_0+2F_X(1)\al&2x_0\al\\
3x_0+2x_1+x_2&2x_0+x_1&x_0
\end{pmatrix}
\begin{pmatrix}
\pi_0 \\ \pi_1 \\ \pi_2
\end{pmatrix}
=
\begin{pmatrix}
0 \\ 0 \\ 3-\E X
\end{pmatrix},
\end{align*}
which implies $(\pi_0,\,\pi_1,\,\pi_2)=(1,\,0,\,0)$ and consequently
\begin{align*}
\vphi(0)=0.968,\,\vphi(u)=1,\,u\in\mathbb{N},\, \Xi(s)=\frac{1}{1-s},\, |s|<1.   
\end{align*}
One may observe that the obtained result is expected, because $u+3n-\sum_{i=1}^{n}X_i>0$ for all $n\in\mathbb{N}$ except when $u=0$ and $X_i$ attains the value 3.

\section{Acknowledgments}
Author appreciates any kind of constructive criticism on the paper and thanks to processor Jonas Šiaulys for detailed reading the first draft of the manuscript. 

\bibliographystyle{elsarticle-harv} 
\bibliography{cas-refs}

\begin{flushleft}
Andrius Grigutis\\
Institute of Mathematics\\
Faculty of Mathematics and Informatics, Vilnius University\\
Naugarduko 24, LT-03225 Vilnius, Lithuania\\
andrius.grigutis@mif.vu.lt
\end{flushleft}

\end{document}